\documentclass[12pt]{amsart}

\usepackage[letterpaper,left=1in,right=1in,top=0.8in,bottom=0.8in]{geometry}
\usepackage{amsmath,amssymb,amsthm,color}
\usepackage{hyperref}
\usepackage{mathtools}

\newtheorem{theorem}{Theorem}[section]
\newtheorem{proposition}[theorem]{Proposition}
\newtheorem{lemma}[theorem]{Lemma}

\newtheorem*{MainProblem}{Lattice Triangle Problem (Veech \cite{Veech1})}
\newtheorem*{conjecture}{Conjecture}

\theoremstyle{definition}

\newtheorem{remark}{Remark}
\newtheorem*{example}{Example}

\newcommand{\Z}{\mathbb{Z}}
\newcommand{\e}{\mathrm{e}}
\newcommand{\R}{\mathbb{R}}
\newcommand{\N}{\mathbb{N}}
\newcommand{\SL}{\mathrm{SL}}

\hypersetup{hidelinks}

\title{On the paucity of lattice triangles}

\subjclass[2020]{37E35, 30F60}
\keywords{Veech surfaces, lattice triangles, Teichm\"uller curves, lattice points}

\author[D.~K. Angdinata]{David Kurniadi Angdinata}
\author[E. Chen]{Evan Chen}
\author[K. Ono]{Ken Ono}
\author[J. Thorner]{Jesse Thorner}
\author[Jiaxin Zhang]{Jiaxin Zhang}
\author[Jujian Zhang]{Jujian Zhang}

  \thanks{\textsuperscript{$\times$}Corresponding author. Email: \texttt{evan@axiommath.ai}}
  \email{evan@axiommath.ai}
  \email{ken@axiommath.ai}
  \email{jesse@axiommath.ai}
  \email{jat9@illinois.edu}
  \email{jiaxin@axiommath.ai}
  \email{jujian@axiommath.ai}

\begin{document}

\begin{abstract}
A rational triangle $T$ (one whose angles are rational multiples of $\pi$) unfolds to a translation surface ${X_T}$.  The \emph{lattice triangle problem} asks to classify those $T$ for which ${X_T}$ is a Veech (lattice) surface, which means that the $\SL_2(\R)$-orbit of ${X_T}$ is closed in its stratum (so its projection to moduli space is a Teichm\"uller curve). The most mysterious regime is the ``hard obtuse window'' (largest angle in $(\pi/2,2\pi/3]$), where it is conjectured that no lattice triangles exist. Using an arithmetic reformulation of the Mirzakhani-Wright rank obstruction, we prove a quantitative theorem that rules out all but a proportion $n^{-1+o(1)}$ of the triangles in this window with denominator $n$.  The main technical result in our proof was autoformalized by AxiomProver in Lean (using mathlib).
\end{abstract}

\maketitle

\section{Introduction and statement of results}\label{sec:Intro1}

The classification of lattice triangles is a central problem in the study of translation surfaces and the dynamics of rational billiards \cite{GJ, Hub, Vo}. A rational triangle $T$ (one whose interior angles are rational multiples of $\pi$) unfolds to a translation surface ${X_T}$ \cite{GJ, Wa}.  The \emph{lattice triangle problem} asks for those $T$ whose unfolding is a Veech (or lattice) surface \cite{Veech1, Veech2}, meaning that its affine automorphism group is a lattice in $\SL_2(\R)$ \cite{Hub, Veech1, Veech2}.  The $\SL_2(\R)$-orbits of such surfaces project to Teichm\"uller curves in the moduli space of abelian differentials \cite{E, EMM, McM, MW}\footnote{The Eskin-Mirzakhani-Mohammadi paper \cite{EMM} includes the celebrated ``Magic Wand Theorem" that gives the classification of orbit closures for the $\SL_2(\R)$-action on the moduli spaces of translation surfaces.}.  The acute and right-angled cases are fully classified \cite{KS00, SP11}.  This paper studies the obtuse scalene regime, which remains a challenge \cite{LNZ, MW}.

Let $\N$ denote the positive integers.  A triangle $T$ is \emph{rational} if its interior angles
\[
\Big(\frac{p\pi}{n}, \frac{q\pi}{n}, \frac{r\pi}{n}\Big),
\]
satisfy $p, q, r \in {\N}$, $p+q+r=n$, and $\gcd(p, q, r, n)=1$.  The billiard flow\footnote{Recall that the \emph{billiard flow} on a Euclidean triangle $T$ is the motion of a point mass traveling at constant speed along straight paths, reflecting off the boundary so that the angle of incidence equals the angle of reflection.  We note that in the setting billiard flows, the holomorphic 1-form $\omega_T$ is uniquely determined by the unfolding process, so we suppress $\omega_T$.} on a rational triangle $T$ can be ``unfolded" into a linear flow on a translation surface ${X_T}$ as follows.
\begin{enumerate}
    \item Start with $T$ in the plane and reflect it repeatedly in its sides.
    \item The angles are rational multiples of $\pi$, so only finitely many orientations of $T$ arise.
    \item Gluing the parallel sides of the resulting copies by translations produces a compact translation surface $X_T$.
\end{enumerate}
The triangle $T$ is called a \emph{lattice triangle} if ${X_T}$ is a \emph{Veech surface}, that is, if its Veech group (the image of the derivative map from the affine automorphism group) is a lattice in $\SL_2(\R)$.  Equivalently, the unfolding generates a Teichm\"uller curve in the moduli space of translation surfaces.

\begin{example}
Consider the isosceles triangle $T$ with angles $(\pi/5, \pi/5, 3\pi/5)$, so that $n=5$ and $(p,q,r) = (1,1,3)$.  Unfolding generates $2n = 10$ copies of $T$ which, when glued along translated sides, form a translation surface ${X_T}$ of genus $g=2$ whose unique zero has order $2$ \cite{McM, Veech1}.  It is a double cover of a regular pentagon branched at the center.  The affine automorphism group of $X_T$ is the Hecke triangle group $G_5$.  This is a lattice in $\SL_2(\R)$ \cite{Veech1, Veech2}, so $T$ is a lattice triangle.
\end{example}

\begin{MainProblem}
Classify the lattice rational triangles.
\end{MainProblem}

This problem has been fully resolved in the acute and right-angled cases:  Any such triangle that is neither sporadic nor a member of a known family is not a lattice triangle (see \cite{Hub} and the references therein).  In the obtuse case, it is known \cite{LNZ} that there are two infinite families of rational lattice triangles, namely those with angles
\[
\Big(\frac{\pi}{n},\frac{\pi}{n},\frac{(n-2)\pi}{n}\Big) \qquad\text{and}\qquad
\Big(\frac{\pi}{2n},\frac{\pi}{n},\frac{(2n-3)\pi}{2n}\Big),
\]
together with Hooper's sporadic obtuse scalene example \cite{Hooper} with angles $(\frac{\pi}{12},\frac{\pi}{3},\frac{7\pi}{12})$.

\begin{remark}
The small number of sporadic lattice triangles generally occur in low-genus strata of the moduli space of abelian differentials \cite{McM, MW}.
\end{remark}

These examples motivate the following conjecture (see \cite{LNZ}).

\begin{conjecture}
The known list of obtuse rational lattice triangles is complete. Equivalently, every obtuse rational lattice triangle is either a member of one of the two known infinite families or is Hooper's triangle.
\end{conjecture}

The point of the conjecture is that the primary open challenge is the \emph{obtuse scalene}\footnote{Note that the example obtuse triangle with angles $(\pi/5, \pi/5, 3\pi/5)$ is isosceles.} regime (i.e.\ distinct interior angles), and that the hardest case is that of triangles whose obtuse angle lies in the ``hard window'' $(\pi/2, 2\pi/3]$ \cite{LNZ, MW}.  The primary tool for ruling out lattice triangles is the affine invariant manifold theory of Mirzakhani and Wright \cite{MW}, specifically their \emph{rank obstruction}.  Larsen, Norton, and Zykoski \cite{LNZ} reformulated it into an effective number-theoretic condition (see Proposition~\ref{prop:LNZ} below) and thereby ruled out lattice triangles in the ``strongly obtuse'' regime, where the obtuse angle exceeds $2\pi/3$.  It is natural to ask how well this criterion handles the ``hard window.'' Indeed, this paper is inspired by an e-mail from Alex Wright 
\cite{WrightEmail}:

\medskip
\noindent
{\it ``Mathematicians are sure they know all lattice triangles; the problem is to *prove* all other triangles are not lattice triangles. There are some elementary criteria. The one used in the paper \cite{LNZ} is just in terms of modular arithmetic. That criterion isn't enough on its own to completely finish the problem, but it should be possible to get surprisingly close using just that criteria.''}
\medskip

Here, we quantitatively confirm his speculation, demonstrating that this number theoretic criterion is powerful enough to disqualify all but a proportion $n^{-1+o(1)}$ of the triangles with denominator $n$ in this regime.  To describe our result, we fix some notation.  For $n\in\mathbb{N}$, define the sets
\[
\begin{aligned}
\mathcal{T}_n&\coloneq \{(p,q)\in\mathbb{N}\times\mathbb{N}\colon p+q<\tfrac{n}{2}\},\\
\mathcal H_n &\coloneq \{(p,q)\in \mathcal{T}_n\colon \gcd(p,q,n)=1\},\\
\mathcal{L}_n&\coloneq \{(p,q)\in\mathcal{H}_n\colon \textup{$(p,q)$ corresponds with the lattice triangle $(\tfrac{p\pi}{n},\tfrac{q\pi}{n},\tfrac{(n-p-q)\pi}{n})$}\}.
\end{aligned}
\]
Since $p+q<\tfrac{n}{2}$ is equivalent to $r>\frac{n}{2}$, the region $\mathcal{H}_n$ is the obtuse region.  We assume that $n\geq 5$; otherwise, the hypothesis that $p,q\geq 1$ implies that $\mathcal{T}_n$ is empty.

Our main result shows that the number-theoretic implementation of the Mirzakhani--Wright rank obstruction is remarkably powerful, ruling out the existence of lattice triangles for almost all candidates in the elusive hard window.  In particular, we prove that if $n$ is large, then $\mathcal{L}_n$ is a thin subset of $\mathcal{H}_n$

\begin{theorem}
\label{thm:Sharp}
For all $\varepsilon>0$, there exists a constant $c(\varepsilon)>0$ such that if $n\geq 5$ is an integer, then $\#\mathcal{L}_n / \#\mathcal{H}_n\leq c(\varepsilon)/n^{1-\varepsilon}$.
\end{theorem}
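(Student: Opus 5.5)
The plan is to show $\#\mathcal L_n\ll_\varepsilon n^{1+\varepsilon}$ and $\#\mathcal H_n\gg n^2$, which together give Theorem~\ref{thm:Sharp}.

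\emph{Lower bound for $\#\mathcal H_n$.} Sieve the condition $\gcd(p,q,n)=1$ over the triangle $\mathcal T_n$ by M\"obius inversion over $d\mid n$:
\[
\#\mathcal H_n=\sum_{d\mid n}\mu(d)\,\#\{(p',q')\in\N^2: p'+q'<n/(2d)\}=\frac{n^2}{8}\prod_{\ell\mid n}\Big(1-\frac{1}{\ell^2}\Big)+O(n^{1+\varepsilon}).
\]
The product over primes $\ell\mid n$ is at least $6/\pi^2$, so $\#\mathcal H_n\gg n^2$ for large $n$. Small $n$ are absorbed into $c(\varepsilon)$.

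\emph{Upper bound for $\#\mathcal L_n$.} By Proposition~\ref{prop:LNZ}, every $(p,q)\in\mathcal L_n$ satisfies an arithmetic constraint for all units $t\in(\Z/n\Z)^\times$ with $t\not\equiv\pm1$. Because $tp+tq+tr\equiv 0\pmod n$, each such constraint is a restriction on the position of the point
\[
P_t=\Big(\Big\{\tfrac{tp}{n}\Big\},\Big\{\tfrac{tq}{n}\Big\}\Big)\in[0,1)^2 .
\]
For instance, it may require $P_t$ to lie in a fixed half of the square (one side of the line $x+y=1$), or in some fixed region of positive area whose complement also has positive area. The idea is that for a \emph{generic} pair $(p,q)$, the orbit $\{P_t: t\in(\Z/n\Z)^\times\}$ is equidistributed in the torus. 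It therefore meets the forbidden region in a positive proportion of $t$, which rules $(p,q)$ out. Only pairs with a structured orbit can survive, and the task is to count them.

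To make this quantitative, apply the two-dimensional Erd\H{o}s--Tur\'an--Koksma inequality with a cutoff $H$. The discrepancy of $\{P_t\}$ is bounded by
\[
\frac1H+\frac{1}{\varphi(n)}\sum_{0<\|(h_1,h_2)\|_\infty\le H}\frac{|c_n(h_1p+h_2q)|}{\max(1,|h_1|)\max(1,|h_2|)},
\]
where $c_n$ is the Ramanujan sum, which satisfies $|c_n(m)|\le\gcd(m,n)$. So $(p,q)$ can only survive if, for some small $(h_1,h_2)\ne 0$, the quantity $\gcd(h_1p+h_2q,n)$ is at least a constant times $\varphi(n)/\log^2 H$. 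With $H$ a fixed large constant, this means $h_1p+h_2q\equiv 0\pmod d$ for some divisor $d\mid n$ with $d\gg n^{1-\varepsilon}$, using $\varphi(n)\gg n^{1-\varepsilon}$.

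\emph{Counting exceptional pairs.} Fix $(h_1,h_2)$ and $d$. The number of $(p,q)\in\mathcal T_n$ with $h_1p+h_2q\equiv0\pmod d$ is $\ll n^2/d+n\cdot(\text{bounded})$. The $\gcd(p,q,n)=1$ condition controls the degenerate case where $\gcd(h_i,d)$ is large. Summing over the $O(H^2)$ vectors $(h_1,h_2)$ and over the $\tau(n)\ll n^\varepsilon$ divisors $d\ge n^{1-\varepsilon}$ gives $\ll n^{1+2\varepsilon}$ exceptional pairs. Relabelling $\varepsilon$ then gives $\#\mathcal L_n\ll_\varepsilon n^{1+\varepsilon}$.

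\emph{Main obstacle.} The hard step is matching the exact form of the constraint in Proposition~\ref{prop:LNZ} with an equidistribution statement. The forbidden region must have positive measure, uniformly in $n$. The excluded units $t\equiv\pm1$, and any further exceptions the proposition allows, must be negligible among the $\varphi(n)$ units. The discrepancy bound must also beat the measure of the forbidden region. If the constraint turns out to involve only a small subgroup of units, I would first restrict the sum to that subgroup and use Gauss-sum bounds in place of Ramanujan sums. The degenerate case where $h_1p+h_2q\equiv0$ holds modulo essentially all of $n$ (lines through the origin) must be counted directly; there are $O(n)$ such pairs per line, which is consistent with the target $n^{1+\varepsilon}$.
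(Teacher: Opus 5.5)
\paragraph{There is a genuine gap.} It sits exactly at the point you flag as the ``main obstacle.'' The forbidden region does \emph{not} have area bounded below uniformly, and once this is taken into account your fixed-$H$ Erd\H{o}s--Tur\'an argument cannot reach $n^{1+\varepsilon}$.

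Concretely, set $x=\{tp/n\}$, $y=\{tq/n\}$, $\alpha=2p/n$, $\beta=2q/n$, $\gamma=1-\alpha-\beta$. Since $p+q<n/2$, we have $[2p]_n=2p$, $[2q]_n=2q$ and $[2r]_n=2r-n$. The three inequalities of Proposition~\ref{prop:LNZ} then read $x<\alpha$, $y<\beta$ and $\{-(x+y)\}<\gamma$. The set where at least two of them hold has area $\alpha\beta+\gamma(1-\gamma)$. This area depends on $(p,q)$ and can be small:
\begin{itemize}
\item It is at most $\eta$ for every pair with $p+q\le\eta n/4$.
\item It is also at most $\eta$ when $p+q$ is within $\eta n/4$ of $n/2$ and $\min(p,q)\le\eta n/8$.
\end{itemize}
That gives $\gg\eta^2n^2$ elements of $\mathcal{H}_n$. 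For these pairs a discrepancy bound of size $\gtrsim 1/H$ with $H$ fixed says nothing. So your argument yields at best $\#\mathcal{L}_n\ll H^{-2}n^2+O_H(n^{1+\varepsilon})$. This is a density-zero statement, not the rate $n^{-1+\varepsilon}$.

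\paragraph{What the theorem needs.} You must detect regions whose area is a negative power of $n$. If you use all three inequalities, you need areas down to about $n^{-1/2}$, since there are $\asymp\eta^2n^2$ pairs of area $\le\eta$. If, like the paper, you use only the box $[1,2p-1]\times[1,2q-1]$, you need areas down to $n^{-1+\varepsilon}$.

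This forces $H$ to be a power of $n$, at least $n^{1/2}$, and then your counting step collapses. A union bound over $O(H^2)$ frequencies, each contributing $\ll n^2/d+n$ pairs, is already $\gg H^2n\ge n^2$. Moreover, the threshold on $d$ drops from $n^{1-\varepsilon}$ to roughly $\eta\varphi(n)$.

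\paragraph{The missing idea.} You need to tie the size of the offending frequency to the size of the target region, and to count frequencies and pairs \emph{jointly}. The paper does this in four steps:
\begin{itemize}
\item It decomposes $S(p,q)$ over $d\in\mathcal{D}_n$ (Lemma~\ref{lem:divdecomp}) and bounds each $N_d$ by the full Fourier majorant $\mathcal{W}_d$ over the dual lattice $\Lambda_d$.
\item Using Lemma~\ref{lem:lattice} and a dyadic pigeonhole, it shows that $|E(p,q)|\ge M(p,q)/2$ forces one of two things: a large $\gcd(p,n)$ or $\gcd(q,n)$, or a dual vector $(k,\ell)\in\Lambda_d$ with $k\ell\neq0$ and $|k\ell|\le1000d\mathcal{Q}/M(p,q)$. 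Here $M(p,q)\asymp pq\varphi(n)/n^2$.
\item It counts the quadruples $(k,\ell,p,q)$ together via the divisor bound (Lemma~\ref{lem:quadcount}).
\item It counts the pairs with $M(p,q)<1000\mathcal{Q}$, that is $pq\ll n^{1+\varepsilon}$, trivially.
\end{itemize}

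\paragraph{Minor point.} The non-usable units are those with $2t\equiv2\pmod n$, namely $t=1$ and, for even $n$, $t=1+n/2$. The unit $t=-1$ is usable for $n\ge5$, so excluding ``$t\equiv\pm1$'' misidentifies them.
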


Our proof begins with the work of Larsen, Norton, and Zykoski (see Proposition~\ref{prop:LNZ}), which gives a sufficient criterion for $(p,q)\in\mathcal{H}_n$ to not correspond with a lattice triangle.  Their condition is purely number-theoretic, and our proof of Theorem~{thm:Sharp} is purely an analysis of their condition.  Without additional input, the exponent $-1$ is optimal; see Remark~\ref{rem:optimal}.

\subsection*{Acknowledgements}

We thank Anne Larsen and Alex Wright for comments on an earlier version of this manuscript and the referee for several illuminating questions and comments.
This paper describes a test case for AxiomProver, an autonomous system.  The project engineering team is Chris Cummins, Ben Eltschig, GSM, Dejan Grubisic, Leopold Haller, Letong Hong (principal investigator), Andranik Kurghinyan, Kenny Lau, Hugh Leather, Simon Mahns, Aram H. Markosyan, Rithikesh Muddana, Manooshree Patel, Gaurang Pendharkar, Vedant Rathi, Alex Schneidman, Volker Seeker, Shubho Sengupta (principal investigator), Ishan Sinha, Jimmy Xin, and Jujian Zhang.

\section{Number-theoretic form of the Mirzakhani--Wright rank obstruction}
\label{sec:RankObstruct}

In this section, we transition from the geometric definition of lattice triangles to the number-theoretic framework required to prove Theorem~\ref{thm:Sharp}.  The classification of lattice surfaces interfaces with the study of orbit closures in the moduli space of abelian differentials \cite{E, EMM, MW}.  Mirzakhani and Wright supplied a definitive geometric tool for this study through their theory of \emph{rank}, which constrains the affine invariant submanifolds that a lattice surface can inhabit.  For rational triangles, Larsen, Norton, and Zykoski \cite{LNZ} distilled this abstract obstruction into a concrete criterion that reduces the lattice property to modular arithmetic.

To state this reformulation, we {incorporate} some notation.  For a positive integer $d$, write $\Z/d\Z$ for the group of residues modulo $d$, and, for $x\in\Z$ or $x\in\Z/d\Z$, write $[x]_d\in\{0,1,\ldots,d-1\}$ for the least nonnegative residue of $x$ modulo $d$.  Let
\[
U_n \coloneq \{a\in\Z/n\Z\colon\gcd(a,n)=1\},\qquad \varphi(n)\coloneq \# U_n.
\]
A unit $a\in U_n$ is called \emph{usable} if $2a\not\equiv 2\pmod n$.  Larsen, Norton, and Zykoski \cite{LNZ} obtained the following crucial criterion.

\begin{proposition}[{\cite[Proposition 2.1]{LNZ}}]
\label{prop:LNZ}
Let $T$ be a rational obtuse triangle with angles $(\frac{p\pi}{n},\frac{q\pi}{n},\frac{r\pi}{n})$ in lowest terms, and let ${X_T}$ be its translation surface.  If there exists a usable unit $a\in U_n$ such that at least two of the {inequalities}
\[
[ap]_n<[2p]_n,\qquad [aq]_n<[2q]_n,\qquad [ar]_n<[2r]_n
\]
{hold, t}hen ${X_T}$ does not have the lattice property. Consequently, $(p,q)\in\mathcal{H}_n\smallsetminus\mathcal{L}_n$.
\end{proposition}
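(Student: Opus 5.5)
This proposition is quoted from \cite[Proposition 2.1]{LNZ}, so within the paper we will simply invoke it. If a self-contained argument were wanted, I would reconstruct it from the Mirzakhani--Wright rank obstruction \cite{MW} in the following order.

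\emph{Cyclic cover model.} First I would realize $X_T$ as the cyclic cover of $\mathbb{P}^1$ branched over three points, $y^n = x^p(x-1)^q$ (after normalization), with deck group $\Z/n\Z$ generated by $\zeta\colon y\mapsto \e^{2\pi i/n}y$. Cohomology then splits into $\zeta$-eigenspaces $H^1_a$, one for each residue class $a$. The classical Chevalley--Weil/Bouw computation gives
\[
\dim H^{1,0}_a = \frac{[ap]_n+[aq]_n+[ar]_n}{n}-1 \in\{0,1\}
\]
when $a\in U_n$. The unfolding form $\omega_T$ spans the eigenline $H^{1,0}_1$ (or $H^{1,0}_{-1}$, depending on conventions), and its Galois conjugates under $\mathrm{Gal}(\mathbb{Q}(\zeta_n)/\mathbb{Q})$ are the eigenspaces $H^1_a$ with $a\in U_n$.

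\emph{Invoking the rank obstruction.} If $X_T$ is Veech, its orbit closure is a rank-one affine invariant manifold defined over the trace field. Mirzakhani--Wright then constrain the eigenspaces $H^1_a$ that lie outside the tautological subspace and its conjugates. As I understand it, the constraint is that deforming in these directions cannot shrink cylinders or collapse saddle connections in the way a twisted form would. I would encode each unit $a$ by the twisted differential
\[
\omega_a = x^{-[ap]_n/n}(x-1)^{-[aq]_n/n}\,dx
\]
on the same surface. I would then compare its cone angles at the three branch points with those of $\omega_2$, built from $[2p]_n,[2q]_n,[2r]_n$. The inequalities $[ap]_n<[2p]_n$ say that $\omega_a$ has strictly smaller cone angle at the corresponding vertex than $\omega_2$ does. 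Having two such vertices would let me build, inside the rank-one locus, a degeneration or cylinder deformation that is forbidden. The excluded values $a\equiv1$ and $a\equiv 1+n/2$ are exactly those for which $\omega_a$ and $\omega_2$ coincide up to the tautological part, which is why $a$ must be usable.

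\emph{Main obstacle.} The hardest step is making the Mirzakhani--Wright conclusion precise enough to produce exactly the ``two of three inequalities'' condition. This means pinning down which invariant of the orbit closure (rank, or the cylinder deformation theorem) is violated, and checking the sign conventions relating $[ax]_n$ to cone angles. Here I would follow \cite{LNZ} closely rather than reprove their argument. The final sentence of the proposition is immediate: $(p,q)\in\mathcal{H}_n$ gives a rational obtuse triangle in lowest terms, and the conclusion says it is not a lattice triangle, so $(p,q)\notin\mathcal{L}_n$.
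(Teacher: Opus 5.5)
Your proposal matches the paper, which also gives no proof of this statement and simply imports it from \cite[Proposition 2.1]{LNZ}; your derivation of the final sentence is exactly how the paper uses it (a pair in $\mathcal{H}_n$ is an obtuse triangle in lowest terms, so the conclusion gives $(p,q)\notin\mathcal{L}_n$). Do not present your optional reconstruction as an argument, though: with your normalization $\omega_a=x^{-[ap]_n/n}(x-1)^{-[aq]_n/n}\,dx$ the cone angle at that vertex is $2\pi(1-[ap]_n/n)$, so $[ap]_n<[2p]_n$ gives a \emph{larger} angle than for $\omega_2$, not a smaller one; and usability $2a\not\equiv 2$ just says $a\not\equiv 2-a\pmod n$, which matters because, writing $\omega_s=x^{[sp]_n/n-1}(x-1)^{[sq]_n/n-1}\,dx$ (so that $\omega_1=\omega_T$), the inequalities at $p$ and $q$ give $\omega_a\omega_{2-a}=c\,\omega_1^2$ for a constant $c$, which couples two distinct eigenlines, a different mechanism from the cone-angle and cylinder-deformation picture you sketch.
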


We will show that for almost all pairs $(p,q)\in\mathcal{H}_n$ one may find a usable $a$ such that
\[
[ap]_n<[2p]_n\qquad \text{and}\qquad [aq]_n<[2q]_n,
\]
in which case Proposition~\ref{prop:LNZ} can be applied. For an integer $0\le h\le n-1$, define
\[
I_{h} \coloneq \{1,2,\dots,h\},\qquad   \mathbf{1}_{I_{h}}(x) \coloneq
  \begin{cases}
    1,& 1\le [x]_n\le h,\\
    0,& \text{otherwise.}
  \end{cases}
\]
For $(p,q)\in\mathcal{H}_n$, we define
\[
h_p \coloneq 2p-1\qquad \text{and} \qquad h_q \coloneq 2q-1.
\]
We have $h_p,h_q < n$ since we are assuming our triangle is obtuse. Define
\[
S(p,q) \coloneq \#\{a\in U_n\colon \textup{$[ap]_n<[2p]_n$ and $[aq]_n<[2q]_n$}\}.
\]

\begin{lemma}
\label{lem:Sidentity}
If $n\ge 5$, $(p,q)\in\mathcal{T}_n$, and $a\in U_n$, then
\[
\mathbf{1}_{I_{h_p}}(ap)=1\iff [ap]_n<[2p]_n, \qquad \mathbf{1}_{I_{h_q}}(aq)=1\iff [aq]_n<[2q]_n .
\]
In particular, we have
\[
S(p,q) = \sum_{a\in U_n} \mathbf{1}_{I_{h_p}}(ap) \mathbf{1}_{I_{h_q}}(aq).
\]
\end{lemma}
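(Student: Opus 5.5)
Fix $(p,q)\in\mathcal{T}_n$ and $a\in U_n$. By symmetry we only need to prove the equivalence for $p$; the $q$ case is identical with $q$ in place of $p$. The plan is to show two things. First, $[2p]_n = 2p$. Second, $[ap]_n\neq 0$. Once we have both, the claim reduces to a comparison of integers.

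For the first point, $p+q<n/2$ and $q\ge 1$ give $1\le p<n/2$, so $2\le 2p\le n-1$. Hence $[2p]_n=2p$, and the condition $[ap]_n<[2p]_n$ reads $[ap]_n\le 2p-1=h_p$. This step also confirms that $0\le h_p\le n-1$, so $I_{h_p}$ is defined as in the setup.

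For the second point, we use that $a$ is a unit modulo $n$. If $ap\equiv 0\pmod n$, then $p\equiv 0\pmod n$, which is impossible because $1\le p<n$. Hence $[ap]_n\ge 1$. The condition $[ap]_n\le h_p$ is therefore equivalent to $1\le [ap]_n\le h_p$, which is exactly $\mathbf{1}_{I_{h_p}}(ap)=1$. This gives the equivalence for $p$. The argument for $q$ uses $1\le q<n/2$ in the same way.

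For the identity $S(p,q)=\sum_{a\in U_n}\mathbf{1}_{I_{h_p}}(ap)\mathbf{1}_{I_{h_q}}(aq)$, note that each summand lies in $\{0,1\}$. A summand equals $1$ exactly when both indicators equal $1$. By the two equivalences, this happens exactly when both defining inequalities of $S(p,q)$ hold. So the sum counts precisely the units counted by $S(p,q)$.

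The only point requiring care is excluding $[ap]_n=0$. A residue of $0$ would satisfy the strict inequality $[ap]_n<[2p]_n$ but would not lie in $I_{h_p}$. The unit hypothesis on $a$ together with $0<p<n$ rules this out. The hypothesis $n\ge 5$ is used only to ensure that $\mathcal{T}_n$ is nonempty.
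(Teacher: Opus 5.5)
Your proof is correct and matches the paper's argument: you show $[2p]_n=2p$ from $p+q<n/2$, then use that $a$ is a unit and $1\le p<n$ to exclude $[ap]_n=0$, which reduces the equivalence to $1\le[ap]_n\le 2p-1$. Your bound $p<n/2$ is weaker than the paper's $p<n/2-1$ but suffices, and the residue-zero case you single out is the same subtlety the paper's proof addresses.
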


\begin{proof}
Since $q\ge 1$ and $p+q<\frac n2$, we have $p<\frac n2-1$, hence $1\le 2p<n-2$ and therefore
$[2p]_n=2p$.  Moreover $\gcd(a,n)=1$ and $1\le p<n$, so $n\nmid ap$ and hence $[ap]_n\ge 1$.
Consequently $\mathbf{1}_{I_{h_p}}(ap)=1$ if and only if $1\le [ap]_n\le 2p-1$, that is, if and
only if $[ap]_n<2p=[2p]_n$.  The assertion for $q$ is proved in the same way.
\end{proof}

The notion of ``usable'' in Proposition~\ref{prop:LNZ} provides an approach to bound $\#\mathcal{L}_n$.

\begin{lemma}
\label{lem:usable}
If $n\ge 3$, then $\mathcal{L}_n\subseteq\{(p,q)\in\mathcal{H}_n\colon S(p,q)\le 2\}$.
\end{lemma}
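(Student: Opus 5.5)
We argue by contraposition. Fix $(p,q)\in\mathcal{H}_n$ with $S(p,q)\ge 3$; we show $(p,q)\notin\mathcal{L}_n$. The key point is that the units in $U_n$ which are \emph{not} usable are exactly the solutions of $2a\equiv 2\pmod n$. If $n$ is odd, $2$ is invertible, so the only solution is $a\equiv 1$. If $n$ is even, the solutions are $a\equiv 1$ and $a\equiv 1+\tfrac n2 \pmod n$. Hence at most two elements of $U_n$ fail to be usable.

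Since $S(p,q)$ counts units $a\in U_n$ with $[ap]_n<[2p]_n$ and $[aq]_n<[2q]_n$, the assumption $S(p,q)\ge 3$ gives at least three such units. At most two of them are non-usable, so at least one, say $a$, is usable. (In fact $a=1$ never contributes, since $[p]_n=p<2p=[2p]_n$ holds but is irrelevant; we do not need this refinement.)

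For this usable $a$, two of the three inequalities in Proposition~\ref{prop:LNZ} hold, namely those for $p$ and $q$. We must still check the proposition's hypotheses: $T$ is a rational obtuse triangle with angles $(p\pi/n,q\pi/n,r\pi/n)$ in lowest terms, where $r=n-p-q$. Obtuseness holds because $p+q<n/2$ gives $r>n/2$. Lowest terms holds because $\gcd(p,q,n)=1$ in $\mathcal{H}_n$ implies $\gcd(p,q,r,n)=1$. The proposition then shows $X_T$ is not a lattice surface, so $(p,q)\notin\mathcal{L}_n$.

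Taking the contrapositive, every $(p,q)\in\mathcal{L}_n$ satisfies $S(p,q)\le 2$, which is the claimed inclusion.

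The only step needing care is the count of non-usable units. One should check that when $n$ is even, $1+\tfrac n2$ may or may not be a unit, but in either case the count is at most $2$. One should also note that $n\ge3$ ensures $1\ne 1+\tfrac n2$ is a genuine bound and not a degenerate case. I expect no real obstacle.
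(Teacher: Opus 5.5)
Your proof is correct and follows the paper's argument: at most two units ($1$ and possibly $1+\frac n2$) fail to be usable, so $S(p,q)\ge 3$ yields a usable $a$ satisfying both the $p$- and $q$-inequalities, and Proposition~\ref{prop:LNZ} applies. Your explicit check of obtuseness and $\gcd(p,q,r,n)=1$ is a welcome detail the paper leaves implicit, but your parenthetical (which you rightly don't use) has it backwards: $a=1$ always \emph{does} contribute to $S(p,q)$, precisely because $[p]_n=p<2p=[2p]_n$ and likewise for $q$.
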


\begin{proof}
A unit $a\in U_n$ fails to be usable if and only if $2a\equiv 2\pmod{n}$, which is equivalent to
$a\equiv 1\pmod{n/\gcd(2,n)}$.  If $n$ is odd, the only such $a\in U_n$ is $a=1$; if $n$ is even,
then $a\in\{1,1+\frac n2\}$.  In either case, there are at most two such units.  If 
$S(p,q)\ge 3$, then by Lemma~\ref{lem:Sidentity}, the set of $a\in U_n$ counted by $S(p,q)$ has at
least three elements, and thus contains a usable unit $a$.  This $a$ satisfies two of the three
inequalities of Proposition~\ref{prop:LNZ}, hence 
$(p,q)\in\mathcal{H}_n\smallsetminus\mathcal{L}_n$.
\end{proof}


\section{Notation and preliminary lemmas}
\label{sec:Prelim}

We will use several conventions and elementary facts from multiplicative number theory \cite{HW}.  We write $\tau(n)$ for the number of divisors
of $n$, $\omega(n)$ for the number of distinct primes dividing $n$, and
\[
\mathcal{Q} = \mathcal{Q}(n)\coloneq 2^{\omega(n)}(1+\log n)^4 .
\]
We have the classical bounds
\[
\omega(n)\leq 2\frac{\log n}{\log\log n},\qquad \tau(n)\leq \exp\Big(2\frac{\log n}{\log\log n}\Big).
\]
It follows that if $\varepsilon>0$, then
\begin{equation}
\label{eqn:Q_small}
2\le\mathcal{Q}\ll_{\varepsilon}n^{\varepsilon}.
\end{equation}
We also use the following elementary bound for all $n\in\mathbb{N}$:
\begin{equation}
\label{eqn:phi_lower}
\frac{n}{\varphi(n)}=\prod_{\substack{r\mid n \\ \textup{$r$ prime}}}\frac{p}{p-1}\leq 2^{\omega(n)}.
\end{equation}
Finally, we set
\[
M(p,q)\coloneq \frac{h_ph_q}{n^2}\varphi(n)=\frac{(2p-1)(2q-1)}{n^2}\varphi(n),
\qquad E(p,q)\coloneq S(p,q)-M(p,q),
\]
so that $M(p,q)$ is the value that $S(p,q)$ would take if the pairs $([ap]_n,[aq]_n)$ with
$a\in U_n$ were equidistributed in $(\Z/n\Z)\times(\Z/n\Z)$.

Define the weights
\[
w_N(k)\coloneq \frac{1}{2\min(k,N-k)}\qquad (N\ge 2,\ 1\le k\le N-1).
\]
We will frequently use the following elementary estimate.

\begin{lemma}
\label{lem:harmonic}
If $n\ge 2$, then $\sum_{k=1}^{n-1}w_n(k)\le \log n$.
\end{lemma}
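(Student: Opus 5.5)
The approach is to group the terms by the value of $\min(k,n-k)$ and compare the resulting harmonic-type sum with a telescoping sum of logarithms.

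For $1\le j<n/2$, the value $\min(k,n-k)=j$ is attained exactly twice, at $k=j$ and $k=n-j$. These two terms together contribute $2\cdot\frac{1}{2j}=\frac1j$. When $n$ is even, the value $j=n/2$ is attained once, at $k=n/2$, and contributes $\frac{1}{n}$. Hence, with $m=\lfloor n/2\rfloor$,
\[
\sum_{k=1}^{n-1}w_n(k)=
\begin{cases}
H_m, & n=2m+1,\\
H_{m-1}+\frac{1}{2m}, & n=2m,
\end{cases}
\qquad H_m\coloneq \sum_{j=1}^m\frac1j .
\]

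The key inequality is $\frac1j\le \log\frac{2j+1}{2j-1}$ for every $j\ge1$. To see it, note that $\log\frac{2j+1}{2j-1}=\int_{j-1/2}^{j+1/2}\frac{dx}{x}$. Since $x\mapsto 1/x$ is convex, Jensen's inequality (equivalently, the midpoint rule, which underestimates integrals of convex functions) shows this integral is at least the value at the midpoint, namely $1/j$. Summing over $j$ telescopes:
\[
H_m\le\sum_{j=1}^m\log\frac{2j+1}{2j-1}=\log(2m+1).
\]
This settles the odd case $n=2m+1$ directly.

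In the even case $n=2m$, the same bound gives $H_{m-1}\le\log(2m-1)$, with $H_0=0$ when $m=1$. It then remains to check that $\frac{1}{2m}\le\log\frac{2m}{2m-1}$. This follows from $-\log(1-x)\ge x$ applied with $x=\frac{1}{2m}$. Adding the two bounds gives $\sum_k w_n(k)\le\log(2m)=\log n$.

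The only potential obstacle is that the crude bound $H_m\le 1+\log m$ loses a constant and would not give $\log n$. The midpoint-convexity comparison avoids this loss. It also handles small $n$ automatically; for instance, $n=2$ gives $\frac12\le\log 2$.
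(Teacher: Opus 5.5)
Your argument is correct, and its first half (grouping by $\min(k,n-k)$ to obtain $H_m$ for $n=2m+1$ and $H_{m-1}+\frac{1}{2m}$ for $n=2m$) is exactly the paper's reduction. The routes differ only in how the final inequalities are closed.

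The paper quotes the classical estimate $H_M\le\log M+\min\{\gamma+\frac{1}{2M},1\}$ and calls the rest straightforward. Unpacked, this uses the $\gamma+\frac{1}{2M}$ branch together with two numerical facts about Euler's constant:
\begin{itemize}
\item In the even case, $\gamma<\log 2$ suffices, since $H_{M-1}+\frac{1}{2M}=H_M-\frac{1}{2M}\le\log M+\gamma$.
\item In the odd case, one needs $\gamma+\frac{1}{2M}\le\log(2+\frac1M)$. Its worst case is $M=1$, which requires $\gamma+\frac12<\log 3$, a margin of only about $0.02$.
\end{itemize}

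Your midpoint-convexity bound $\frac1j\le\log\frac{2j+1}{2j-1}$ telescopes exactly to $\log(2m+1)$. It therefore needs no information about $\gamma$ and no small-case verification, and the even case costs only $-\log(1-x)\ge x$. The result is a fully self-contained proof; the paper's is shorter only because it outsources the asymptotics of $H_M$.

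Your remark that the cruder branch $H_M\le 1+\log M$ would not suffice is also accurate. The paper genuinely needs the $\gamma$-branch of its estimate, just as you need the exact telescoping.
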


\begin{proof}
Write $H_M\coloneq\sum_{j\le M}\frac1j$, with the convention $H_0=0$.  We use the classical estimate
\begin{equation}
\label{eqn:harmonic_number}
H_M\le \log M+\min\Big\{\gamma+\frac{1}{2M},1\Big\}\qquad(M\ge 1).
\end{equation}
If $n$ is odd and $M$ satisfies $n=2M+1$, then as $k$ runs through $1,2,\dots,n-1$, the quantity $\min(k,n-k)$ assumes each of the values $1,2,\dots,M$ exactly twice.  It follows that $\sum_{k=1}^{n-1}w_n(k)=H_M$, so the lemma reduces to the bound $H_M\le\log(2M+1)$.  This is straightforward to check.

If $n$ is even and $M$ satisfies $n=2M$, then $\min(k,n-k)$ assumes each of the values $1,\dots,M-1$ exactly twice and the value $M$ once.  It follows that $\sum_{k=1}^{n-1}w_n(k)=H_{M-1}+\frac{1}{2M}$, and the lemma reduces to the bound $H_{M-1}+\frac{1}{2M}\le\log(2M)$.  This is also straightforward to check.
\end{proof}

We need estimates for the Fourier coefficients of the indicator function of an interval.

\begin{lemma}
\label{lem:intervalFourier}
Let $d\ge2$, let $0\le H\le d-1$, let $f\coloneq \mathbf{1}_{\{1,2,\dots,H\}}$ as a function on $\Z/d\Z$, and let $\widehat{f}(k)\coloneq \frac1d\sum_{x\in\Z/d\Z}f(x)\e^{-2\pi ikx/d}$.  Then $\widehat{f}(0)=H/d$, and
\[
|\widehat{f}(k)|\le\min\Big(\frac Hd,\ w_d(k)\Big)\qquad(1\le k\le d-1).
\]
\end{lemma}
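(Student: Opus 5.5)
The plan is to compute $\widehat{f}(k)$ explicitly as a geometric sum and bound it with an elementary sine inequality.

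\textbf{Step 1 (trivial parts).} For $k=0$ the sum counts the $H$ elements of $\{1,\dots,H\}$, so $\widehat f(0)=H/d$. For every $k$, the triangle inequality gives $|\widehat f(k)|\le \frac1d\sum_x f(x)=H/d$. This is the first term in the minimum. It also settles $H=0$, where $f\equiv 0$ and $\widehat f(k)=0$.

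\textbf{Step 2 (closed form).} Fix $1\le k\le d-1$ and set $\zeta=\e^{-2\pi i k/d}$, so $\zeta\ne 1$. Summing the geometric series,
\[
\widehat f(k)=\frac1d\sum_{x=1}^{H}\zeta^x=\frac1d\cdot\frac{\zeta(\zeta^H-1)}{\zeta-1}.
\]
Since $|\zeta^H-1|\le 2$ and $|\zeta-1|=2|\sin(\pi k/d)|$, this gives
\[
|\widehat f(k)|\le \frac{1}{d\,|\sin(\pi k/d)|}.
\]

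\textbf{Step 3 (sine bound).} Put $m=\min(k,d-k)$, so $1\le m\le d/2$. By symmetry, $|\sin(\pi k/d)|=\sin(\pi m/d)$, and $\pi m/d\in(0,\pi/2]$. Jordan's inequality $\sin t\ge 2t/\pi$ on $[0,\pi/2]$ then gives $\sin(\pi m/d)\ge 2m/d$. Hence
\[
|\widehat f(k)|\le \frac{1}{d\cdot 2m/d}=\frac{1}{2m}=w_d(k).
\]
Together with Step 1 this proves the stated minimum.

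The only step needing any care is the lower bound for the sine. It works because the normalization $w_d(k)=1/(2\min(k,d-k))$ is exactly what Jordan's inequality yields, with no loss of constants. The remaining steps are routine.
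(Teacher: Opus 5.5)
Your proposal is correct and follows the paper's proof: you sum the geometric series to get $|\widehat{f}(k)|\le 1/(d|\sin(\pi k/d)|)$ and then use a linear lower bound for the sine. The paper states that bound as $|\sin(\pi x)|\ge 2\min(x,1-x)$ on $[0,1]$, which is your Jordan inequality combined with the symmetry $k\mapsto d-k$.
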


\begin{proof}
The formula for $\widehat{f}(0)$ is immediate, as is the bound $|\widehat{f}(k)|\le H/d$.  Let $1\le k\le d-1$.  Summing the geometric series, we obtain
\[
|\widehat{f}(k)|=\frac1d\Big|\sum_{x=1}^{H}\e^{-2\pi ikx/d}\Big| =\frac{1}{d}\Big|\frac{\sin(H k \pi/d)}{\sin(k\pi/d)}\Big|\leq \frac{1}{d|\sin(k \pi/d)|}.
\]
Note that $|\sin(\pi x)|\ge2\min(x,1-x)$ for $x\in[0,1]$.  Taking $x=k/d$, we find that $d|\sin(\pi k/d)|\ge2\min(k,d-k)$, hence $|\widehat{f}(k)|\le w_d(k)$.
\end{proof}

Finally, we record the order of magnitude of $\#\mathcal{H}_n$.

\begin{lemma}
\label{lem:sizeHn}
If $n\geq 5$, then $\#\mathcal{H}_n\asymp n^2$.
\end{lemma}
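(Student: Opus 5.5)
The upper bound is immediate and the lower bound comes from a small Möbius-inversion computation. For the upper bound, $\mathcal{H}_n\subseteq\mathcal{T}_n$ and $\mathcal{T}_n$ is contained in the set of pairs $(p,q)$ with $1\le p,q< n/2$, so $\#\mathcal{H}_n\le n^2/4$. All the work is in a lower bound $\#\mathcal{H}_n\gg n^2$ with an absolute implied constant, uniform in $n\ge5$.

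For the lower bound I will use Möbius inversion on the coprimality condition $\gcd(p,q,n)=1$:
\[
\#\mathcal{H}_n=\sum_{d\mid n}\mu(d)\,\#\{(p,q)\in\mathcal{T}_n\colon d\mid p,\ d\mid q\}=\sum_{d\mid n}\mu(d)\,\#\mathcal{T}'_{n/d},
\]
where $\mathcal{T}'_{m}\coloneq\{(p',q')\in\N^2\colon p'+q'<m/2\}$. Writing $N=\lceil m/2\rceil-1$, we have $\#\mathcal{T}'_m=N(N-1)/2$, which equals $m^2/8+O(m)$. Hence
\[
\#\mathcal{H}_n=\frac{n^2}{8}\sum_{d\mid n}\frac{\mu(d)}{d^2}+O\Big(n\sum_{d\mid n}\frac1d\Big)=\frac{n^2}{8}\prod_{\ell\mid n}\Big(1-\frac1{\ell^2}\Big)+O(n\log\log n).
\]
The product is at least $\prod_{\ell}(1-\ell^{-2})=6/\pi^2$. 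It would be simpler to bound the error by $O(n\tau(n))=O(n^{1+\varepsilon})$, and either bound suffices for large $n$.

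This only gives $\#\mathcal{H}_n\ge c\,n^2$ for $n\ge n_0$. To get uniformity for all $n\ge5$, I will use a direct argument that avoids the error term. Every pair $(1,q)$ with $1\le q<n/2-1$ lies in $\mathcal{H}_n$, since $\gcd(1,q,n)=1$. More generally, every pair with $\gcd(p,q)=1$ and $p+q<n/2$ lies in $\mathcal{H}_n$. So $\#\mathcal{H}_n\ge\#\{(p,q)\colon \gcd(p,q)=1,\ p+q<n/2\}$, and the count of coprime pairs in a triangle of side $\asymp n$ is $\gg n^2$ uniformly. For small $n$ this count is at least $1$: at $n=5$ the pair $(1,1)$ qualifies. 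Since only finitely many $n$ lie below $n_0$ and $\#\mathcal{H}_n\ge1$ for each of them, the constant can be adjusted to cover all $n\ge5$.

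The only mild obstacle is making the lower bound uniform, and the finite-range adjustment in the last step handles it.
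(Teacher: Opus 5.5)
Your proof is correct and is essentially the paper's. It uses Möbius inversion over $d\mid n$ with $\#\{(p',q')\colon p'+q'<\frac{n}{2d}\}=\binom{\lceil n/(2d)\rceil-1}{2}$, a main term $\frac{n^2}{8}\prod_{\ell\mid n}(1-\ell^{-2})\ge\frac{3n^2}{4\pi^2}$ that dominates the $O(n\sum_{d\mid n}1/d)$ error for large $n$, and $(1,1)\in\mathcal{H}_n$ to cover the finitely many remaining $n\ge5$.

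The coprime-pairs paragraph is an unproved detour, and it does not actually avoid an error term: a uniform $\gg n^2$ count of coprime pairs needs the same large-$n$/small-$n$ split or a separate argument. Nothing rests on it, though, because your final finite-range adjustment already closes the proof exactly as the paper does.
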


\begin{proof}
For $e\in\N$, the map $(p,q)\mapsto(p/e,q/e)$ is a bijection from $\{(p,q)\in\mathcal{T}_n\colon e\mid p,~e\mid q\}$ onto $\{(p',q')\in\N^2\colon p'+q'<\frac{n}{2e}\}$.  The latter set has cardinality $\binom{K_e}{2}$, where $K_e= \lceil\frac{n}{2e}\rceil-1$.  It follows that $\binom{K_e}{2}=\frac{n^2}{8e^2}+O(\frac ne)$.  Detecting the condition $\gcd(p,q,n)=1$ by M\"obius inversion over the divisors $e$ of $n$, we conclude that
\[
\#\mathcal{H}_n=\sum_{e\mid n}\mu(e)\binom{K_e}{2} =\frac{n^2}{8}\sum_{e\mid n}\frac{\mu(e)}{e^2}+O\Big(n\sum_{e\mid n}\frac1e\Big)=\frac{n^2}{8}\prod_{r\mid n}(1-r^{-2})+O(n\log n),
\]
where $r$ runs over the primes dividing $n$.  The product over $r\mid n$ lies between $\zeta(2)^{-1}$ and $1$, so the main term is $\asymp n^2$ and dominates the error term for $n$ large enough.  For the remaining $n$ it suffices to note that $(1,1)\in\mathcal{H}_n$, so that $\#\mathcal{H}_n\ge1$.
\end{proof}

\section{Proof of Theorem~\texorpdfstring{\ref{thm:Sharp}}{1.1}}
\label{sec:Sharper}

The quantity $E(p,q)$ is a discrepancy, measuring the failure of the point set
\[
\{([ap]_n,[aq]_n)\colon a\in U_n\}
\]
to intersect $[1,h_p]\times[1,h_q]$ at the expected number of points.  A direct approach to estimating $E(p,q)$ is difficult because the units $a\in U_n$ have a complicated distribution within $\Z/n\Z$.  To describe our approach, we let
\[
\mathcal{D}_n\coloneq \{d\in\N\colon n/d\ \textup{is squarefree, $d\mid n$}\},
\]
so that $\#\mathcal{D}_n=2^{\omega(n)}$.  We decompose $S(p,q)$ exactly over the divisors of $n$, trading the coprimality condition on $a$ for a family of $2^{\omega(n)}$ counting problems, one for each $d\in\mathcal{D}_n$, whose point sets $\{([bp]_d,[bq]_d)\colon b\in\Z/d\Z\}$ are cyclic subgroups of $(\Z/d\Z)\times(\Z/d\Z)$.  The discrepancy of each is governed by the short vectors of the dual lattice $\Lambda_d$ (see \eqref{eqn:Lambda_d_def}).  Such vectors are rare.

Throughout, $n\ge 5$ and $(p,q)\in\mathcal{H}_n$.  Recall that $h_p=2p-1$ and
$h_q=2q-1$, so that
\begin{equation}
\label{eqn:hsum}
1\le p\le h_p,\qquad 1\le q\le h_q,\qquad h_p+h_q<n.
\end{equation}
  For $d\in\mathcal{D}_n$, let
\[
H_p^{(d)}\coloneq \Big\lfloor\frac{h_p d}{n}\Big\rfloor,\quad H_q^{(d)}\coloneq \Big\lfloor\frac{h_q d}{n}\Big\rfloor,\quad
N_d\coloneq \#\big\{b\in\Z/d\Z\colon 1\le[bp]_d\le H_p^{(d)},\ 1\le[bq]_d\le H_q^{(d)}\big\},
\]
Let $\mathcal{F}_d\coloneq \{j\in\Z\colon -d/2<j\le d/2\}$, a
complete set of representatives for $\Z/d\Z$.  If $j\in\mathcal{F}_d$, then $|j|$ equals the
distance from $j$ to $0$ modulo $d$.

\subsection{The divisor decomposition}

\begin{lemma}
\label{lem:divdecomp}
If $n\ge5$ and $(p,q)\in\mathcal{T}_n$, then $S(p,q)=\sum_{d\in\mathcal{D}_n}\mu(n/d)N_d$.
\end{lemma}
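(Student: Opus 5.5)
By Lemma~\ref{lem:Sidentity}, $S(p,q)=\sum_{a\in U_n}\mathbf{1}_{I_{h_p}}(ap)\mathbf{1}_{I_{h_q}}(aq)$. The plan is to remove the coprimality condition by M\"obius inversion and then identify each resulting sum with some $N_d$.

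First, extend the sum to all of $\Z/n\Z$ by inserting the indicator of $\gcd(a,n)=1$. We write this indicator as $\sum_{e\mid\gcd(a,n)}\mu(e)$. Interchanging the order of summation gives
\[
S(p,q)=\sum_{e\mid n}\mu(e)\sum_{\substack{a\in\Z/n\Z\\ e\mid a}}\mathbf{1}_{I_{h_p}}(ap)\,\mathbf{1}_{I_{h_q}}(aq).
\]
Here $e\mid a$ is well defined modulo $n$ because $e\mid n$. Only squarefree $e$ contribute. Setting $d\coloneq n/e$, this is exactly the condition $d\in\mathcal{D}_n$, and $\mu(e)=\mu(n/d)$.

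Next, fix such $e$ and $d=n/e$. The map $b\mapsto eb$ is a bijection from $\Z/d\Z$ onto the multiples of $e$ in $\Z/n\Z$. The key congruence fact is that for every integer $x$,
\[
[ex]_n=e\,[x]_d .
\]
To see this, note that $e[x]_d\equiv ex\pmod{n}$ and $0\le e[x]_d\le e(d-1)<n$.

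We apply this with $x=bp$ and $x=bq$. The condition $1\le[ebp]_n\le h_p$ becomes $1\le e[bp]_d\le h_p$. Since $[bp]_d$ is an integer, this is equivalent to $1\le[bp]_d\le\lfloor h_p/e\rfloor=\lfloor h_pd/n\rfloor=H_p^{(d)}$. The same argument converts the condition on $ebq$ into $1\le[bq]_d\le H_q^{(d)}$. Hence the inner sum over $a$ equals $N_d$, and summing over $d\in\mathcal{D}_n$ yields the lemma.

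There is no real obstacle; the only points needing care are the following.
\begin{itemize}
\item The identity $[ex]_n=e[x]_d$ is what turns the lower bound $1\le\cdot$ (that is, nonvanishing mod $n$) into nonvanishing mod $d$.
\item The floor $\lfloor h_p/e\rfloor$ matches the definition of $H_p^{(d)}$.
\item The hypothesis $(p,q)\in\mathcal{T}_n$ is used only through Lemma~\ref{lem:Sidentity} and the bound $h_p,h_q<n$.
\end{itemize}
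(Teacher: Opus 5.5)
Your proof is correct and is essentially the paper's argument: M\"obius inversion over $e\mid\gcd(a,n)$, the substitution $a=eb$ with $d=n/e$, and the identity $[ebp]_n=e\,[bp]_d$, which turns the conditions into $1\le[bp]_d\le\lfloor h_p/e\rfloor=H_p^{(d)}$ (and likewise for $q$), with the non-squarefree $e$ dropping out. You verify $[ex]_n=e\,[x]_d$ explicitly where the paper only states it, and the bound $h_p,h_q<n$ is not actually needed at this step.
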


\begin{proof}
Since $\sum_{c\mid m}\mu(c)=\mathbf{1}_{m=1}$, Lemma~\ref{lem:Sidentity} gives
\[
S(p,q)=\sum_{c\mid n}\mu(c) \#\{a\in[1,n]\colon c\mid a,\ 1\le[ap]_n\le h_p,\ 1\le[aq]_n\le h_q\}.
\]
Write $a=cb$ with $1\le b\le d$, where $d\coloneq n/c$.  Since $n=cd$, we have $[cbp]_n=c [bp]_d$, so
the condition $1\le[cbp]_n\le h_p$ holds if and only if $1\le[bp]_d\le\lfloor h_p/c\rfloor=H_p^{(d)}$.
The inner cardinality therefore equals $N_{n/c}$, and the terms for which $n/d$ is not squarefree
vanish.
\end{proof}

\subsection{A majorant for each divisor}

For $d\in\mathcal{D}_n$, let
\begin{equation}
\label{eqn:Lambda_d_def}
\Lambda_d\coloneq \{(k,\ell)\in\Z^2\colon kp+\ell q\equiv0\pmod d\}.
\end{equation}
Since $\gcd(p,q,d)$ divides $\gcd(p,q,n)=1$, the map $(k,\ell)\mapsto kp+\ell q$ from
$(\Z/d\Z)\times(\Z/d\Z)$ to $\Z/d\Z$ is surjective, hence
\begin{equation}
\label{eqn:detLambda}
\det\Lambda_d=[\Z^2:\Lambda_d]=d.
\end{equation}
Define $u_d$ and $v_d$ on $\mathcal{F}_d$ by $u_d(0)\coloneq H_p^{(d)}$, $v_d(0)\coloneq H_q^{(d)}$.  Also, for nonzero integers $k$ and $\ell$, define
\[
u_d(k)\coloneq \frac{d}{2|k|},\qquad v_d(\ell)\coloneq \frac{d}{2|\ell|},\qquad  \mathcal{W}_d\coloneq \frac1d\sum_{\substack{(k,\ell)\in\Lambda_d\cap(\mathcal{F}_d\times \mathcal{F}_d)\\ (k,\ell)\ne(0,0)}}u_d(k) v_d(\ell).
\]

\begin{lemma}
\label{lem:majd}
If $n\ge5$, $(p,q)\in\mathcal{H}_n$, and $d\in\mathcal{D}_n$, then
\[
\Big|N_d-\frac{H^{(d)}_pH^{(d)}_q}{d}\Big|\le\mathcal{W}_d, \qquad\text{and}\qquad |E(p,q)|\le 2^{\omega(n)+1}+\sum_{d\in\mathcal{D}_n}\mathcal{W}_d.
\]
\end{lemma}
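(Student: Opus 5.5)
The plan has two stages. First comes a Fourier expansion of $N_d$ on $\Z/d\Z$. Then the pieces are reassembled through Lemma~\ref{lem:divdecomp}.

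For the first inequality, write $f=\mathbf{1}_{\{1,\dots,H_p^{(d)}\}}$ and $g=\mathbf{1}_{\{1,\dots,H_q^{(d)}\}}$ as functions on $\Z/d\Z$. By Fourier inversion,
\[
N_d=\sum_{b\in\Z/d\Z}f(bp)g(bq)=\sum_{k,\ell\in\mathcal{F}_d}\widehat f(k)\widehat g(\ell)\sum_{b}\e^{2\pi i b(kp+\ell q)/d}=d\sum_{(k,\ell)\in\Lambda_d\cap(\mathcal{F}_d\times\mathcal{F}_d)}\widehat f(k)\widehat g(\ell).
\]
By Lemma~\ref{lem:intervalFourier}, the term $(0,0)$ equals $d\cdot\frac{H_p^{(d)}}{d}\cdot\frac{H_q^{(d)}}{d}=H_p^{(d)}H_q^{(d)}/d$.

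For the remaining terms we need $d|\widehat f(k)|\le u_d(k)$ for all $k\in\mathcal{F}_d$. If $k=0$, this is equality. If $k\neq0$, then $\min(k,d-k)$ for the residue of $k$ equals $|k|$ when $k\in\mathcal{F}_d$, so Lemma~\ref{lem:intervalFourier} gives $d|\widehat f(k)|\le d\,w_d(k)=d/(2|k|)=u_d(k)$. The same bound holds for $g$ with $v_d$. Hence $d|\widehat f(k)\widehat g(\ell)|\le u_d(k)v_d(\ell)/d$, and summing over the nonzero lattice points gives exactly $\mathcal{W}_d$. (For $d=1$ the set of nonzero points is empty, so the bound is trivial.) One subtlety is that Lemma~\ref{lem:intervalFourier} needs $H\le d-1$. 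This holds because $h_p<n$, which gives $H_p^{(d)}<d$.

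For the bound on $E(p,q)$, apply Lemma~\ref{lem:divdecomp} and the first inequality:
\[
|E(p,q)|\le\sum_{d\in\mathcal{D}_n}\mathcal{W}_d+\Big|\sum_{d\in\mathcal{D}_n}\mu(n/d)\frac{H_p^{(d)}H_q^{(d)}}{d}-M(p,q)\Big|.
\]
We have $\sum_{d\mid n}\mu(n/d)d=\varphi(n)$, and only $d\in\mathcal{D}_n$ contribute to it. So $M(p,q)=\sum_{d\in\mathcal{D}_n}\mu(n/d)\frac{(h_pd/n)(h_qd/n)}{d}$, and it suffices to show that each $d$ contributes at most $2$:
\[
\Big|\frac{(h_pd/n)(h_qd/n)-H_p^{(d)}H_q^{(d)}}{d}\Big|\le2.
\]
Put $x=h_pd/n$ and $y=h_qd/n$, so that $0\le x,y<d$ and $H_p^{(d)}=\lfloor x\rfloor$, $H_q^{(d)}=\lfloor y\rfloor$. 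Then
\[
0\le xy-\lfloor x\rfloor\lfloor y\rfloor\le x+y<2d,
\]
where the last step uses $x+y<d$ from \eqref{eqn:hsum}; even $x+y<2d$ would be enough. Dividing by $d$ gives a contribution of at most $2$ per divisor. Since $\#\mathcal{D}_n=2^{\omega(n)}$, the total is at most $2^{\omega(n)+1}$, which gives the second inequality.

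The main obstacle is bookkeeping: making sure the representatives in $\mathcal{F}_d$ match the weights $w_d$ (in particular at $k=d/2$), and that the M\"obius identity for $\varphi(n)$ really restricts to $\mathcal{D}_n$. Neither is deep.
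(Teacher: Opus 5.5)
Your proposal is correct and follows the paper's proof essentially step for step. You expand $N_d$ via Fourier analysis over $\Lambda_d\cap(\mathcal{F}_d\times\mathcal{F}_d)$ using the coefficient bounds of Lemma~\ref{lem:intervalFourier}, then combine Lemma~\ref{lem:divdecomp} with $\sum_{d\mid n}\mu(n/d)d=\varphi(n)$ and a per-divisor floor-error estimate. Your one-sided bound $0\le xy-\lfloor x\rfloor\lfloor y\rfloor\le x+y$ is a slightly cleaner form of the paper's $\theta_p,\theta_q$ computation. Your check that $H_p^{(d)}\le d-1$ makes explicit a hypothesis the paper uses without comment, and it also settles $d=1$, where $N_1=0=H_p^{(1)}H_q^{(1)}$.
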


\begin{proof}
If $d=1$, then \eqref{eqn:hsum} implies that $H^{(d)}_p=\lfloor h_p/n\rfloor=0$, hence $N_d=0$.  As $\mathcal{F}_1^2\smallsetminus\{(0,0)\}$ is empty, $\mathcal{W}_1=0$ as well.  The first assertion therefore reads $0\le0$.

Assume then that $d\ge2$.  Let
\[
f_p\coloneq \mathbf{1}_{\{1,\dots,H^{(d)}_p\}},\qquad f_q\coloneq \mathbf{1}_{\{1,\dots,H^{(d)}_q\}}
\]
as functions on $\Z/d\Z$, and let
\[
\widehat{f}(k)=\frac1d\sum_{x\in \Z/d\Z}f(x)\e^{-2\pi ikx/d}.
\]
Fourier inversion and the orthogonality relation
\[
\sum_{b\in\Z/d\Z}\e^{2\pi ib(kp+\ell q)/d}=d \mathbf{1}_{(k,\ell)\in\Lambda_d}
\]
combine to give
\[
N_d=\sum_{b\in\Z/d\Z}f_p(bp)f_q(bq)=d\!\!\sum_{(k,\ell)\in\Lambda_d\cap(\mathcal{F}_d\times \mathcal{F}_d)}\!\!\widehat{f_p}(k) \widehat{f_q}(\ell).
\]
The $(k,\ell)=(0,0)$ term equals $H^{(d)}_pH^{(d)}_q/d$.  By Lemma~\ref{lem:intervalFourier}, we have $|\widehat{f_p}(k)|\le u_d(k)/d$ for all $k\in\mathcal{F}_d$, with equality at $k=0$, and likewise for $g_q$.  Now, the first assertion follows from the triangle inequality.

For the second assertion, the identity $\sum_{d\mid n}\mu(n/d)d=\varphi(n)$ implies that
\[
M(p,q)=\frac{h_ph_q}{n^2}\sum_{d\mid n}\mu(n/d).
\]
Therefore, Lemma~\ref{lem:divdecomp} gives
\[
|E(p,q)|\le\sum_{d\in\mathcal{D}_n}\Big|N_d-\frac{h_ph_qd}{n^2}\Big| \le\sum_{d\in\mathcal{D}_n}\Big(\mathcal{W}_d+\Big|\frac{H^{(d)}_pH^{(d)}_q}{d}-\frac{h_ph_qd}{n^2}\Big|\Big).
\]
If $\theta_p,\theta_q\in[0,1)$ satisfy $H^{(d)}_p=\frac{h_pd}{n}-\theta_p$ and $H^{(d)}_q=\frac{h_qd}{n}-\theta_q$, then the second summand equals
\[
\Big|\frac{\theta_qh_p}{n}+\frac{\theta_ph_q}{n}-\frac{\theta_p\theta_q}{d}\Big|\leq \frac{h_p+h_q}{n}+\frac1d\le2
\]
by \eqref{eqn:hsum}.  Since $\#\mathcal{D}_n=2^{\omega(n)}$, the
proof is complete.
\end{proof}

\subsection{Lattices and resonance}

We require a result from lattice theory.  The proof is elementary, but we could not find a reference.

\begin{lemma}
\label{lem:lattice}
Let $A,B>0$ and $R\coloneq \{(x,y)\colon|x|\le A,\ |y|\le B\}$.  Let $\Lambda\subseteq\R^2$ be a lattice of determinant $D$ and  $J\coloneq \#\big((\Lambda\cap R)\smallsetminus\{0\}\big)$.  If $J\ge1$, then either $J<24AB/D$, or there exists $w=(w_1,w_2)\in\Lambda\smallsetminus\{0\}$ such that $|w_1|\le4A/J$ and $|w_2|\le4B/J$.
\end{lemma}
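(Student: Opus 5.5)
We split according to the rank of the set $P\coloneq(\Lambda\cap R)\smallsetminus\{0\}$, which is centrally symmetric and has $J$ elements.

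\emph{Rank one case.} Suppose $P$ spans a line. Let $w$ be a primitive vector of $\Lambda$ on that line. Then every point of $P$ has the form $mw$ with $m\in\Z\smallsetminus\{0\}$. The set $\{m\colon mw\in R\}$ is symmetric, and since $R$ is convex it is an interval. So $P=\{mw\colon 1\le|m|\le M\}$ with $J=2M$. Because $Mw\in R$, we get $|w_1|\le A/M=2A/J$ and $|w_2|\le 2B/J$. This is stronger than the claimed $4A/J$ and $4B/J$.

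\emph{Rank two case.} Suppose $P$ contains two linearly independent vectors. Our aim is to show $J<24AB/D$. We use a successive-minima argument adapted to the box $R$. Let $\Lambda'\coloneq\Lambda\cap R'$, where $R'=2R$. Consider the translates $x+\tfrac12 R$ for $x\in\Lambda\cap R$. These all lie in $\tfrac32R$, but they may overlap, so this route gives no direct bound.

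Instead we count more directly. Take linearly independent $v_1,v_2\in P$. The parallelogram they span has area at least $D$, since it contains a fundamental domain's worth of area. Its area is $|\det(v_1,v_2)|\le 2AB$ because each coordinate is bounded. This only yields $D\le 2AB$, so we need a genuine counting bound.

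For the count we use a Blichfeldt-type argument. The sets $x+[0,A)\times[0,B)$, for $x\in\Lambda$, can overlap. So instead we project onto $\Lambda/\Z v$ for a suitable $v$. Choose $v$ to be a primitive lattice vector in $R$, and sort $P$ into lines parallel to $v$. Each line meets $R$ in at most $2M+1$ points, where $M$ is the largest integer with $Mv\in R$ up to translation; by convexity the number of lattice points on any line in $R$ is at most $2M'+1$. Consecutive lines are separated, in the direction transverse to $v$, by distance $D/|v|$. Hence the number of lines meeting $R$ is bounded by roughly the width of $R$ transverse to $v$, divided by $D/|v|$, plus $1$.

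Multiplying the two counts gives $J+1\le(\#\text{lines})\cdot(\text{points per line})$. Carrying this out in sup-norm coordinates scaled by $A,B$ should give a bound of the shape
\[
J+1\le\Big(\frac{c_1AB}{D}+1\Big)(2M+1).
\]

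To finish, suppose $J\ge 24AB/D$. Then the term $AB/D$ is at most $J/24$, and either $M$ is large or the lines term is dominated by the $+1$. In the first case, $Mv\in 2R$ with $M\gtrsim J/4$, which gives $|v_1|\le 4A/J$ and $|v_2|\le 4B/J$, as required. The expected main obstacle is making the constants land exactly at $24$ and $4$. This requires a careful choice of $v$; we would take $v$ to minimize $\max(|v_1|/A,|v_2|/B)$. It also requires bounding the number of parallel lines via the transverse width of $R$ and $D/|v|$, and handling the translated-line point counts using $2R$.
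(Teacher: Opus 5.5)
Your rank-one case is correct, with the better constants $2A/J$ and $2B/J$. The plan you reach in the last paragraph is also exactly the paper's: choose $v\in\Lambda\smallsetminus\{0\}$ minimizing $\lambda\coloneq\max(|v_1|/A,|v_2|/B)$, sort $\Lambda\cap R$ into lines parallel to $v$, and multiply the two counts. The gap is that the estimate which \emph{is} the lemma is never carried out. The bound you propose in its place, $J+1\le(c_1AB/D+1)(2M+1)$, is too weak to conclude anything. If $J\ge24AB/D$, it only yields $2M+1\ge(J+1)/(c_1J/24+1)$, which is bounded by a constant. So neither ``$M\gtrsim J/4$'' nor your dichotomy ``either $M$ is large or the lines term is dominated by the $+1$'' follows. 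The missing ingredient is that the number of lines is proportional to $\lambda$. Separately, on a line not through $0$ the number of points in $R$ need not be at most $2M+1$; the correct count is $1+2/\lambda$.

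Here is the step made concrete. A minimizer $v$ is primitive, so $\Lambda=\Z v\oplus\Z w'$, and the $w'$-coordinate is $\pi(x,y)=(v_1y-v_2x)/\Delta$ with $|\Delta|=D$. On $R$ one has $|\pi|\le(|v_1|B+|v_2|A)/D\le2\lambda AB/D$, so at most $1+4\lambda AB/D$ lines meet $R$. This is your ``transverse width divided by $D/|v|$'', made precise. Two points of $R$ on one line differ by some $mv\in 2R$, so $|m|\lambda\le2$ and each line carries at most $1+2/\lambda$ points. Hence, using $\lambda\le1$,
\[
J+1\le\Big(1+\frac{2}{\lambda}\Big)\Big(1+\frac{4\lambda AB}{D}\Big)\le 1+\frac{12AB}{D}+\frac{2}{\lambda}.
\]
The crucial feature is that the cross term $8AB/D$ does not depend on $\lambda$. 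If $J\ge24AB/D$, this gives $J\le J/2+2/\lambda$, so $\lambda\le4/J$ and $w=v$ works.

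The same inequality would also have completed your rank-two case as you first stated it. A point of $P$ off $\R v$ has $|\pi|\ge1$, which forces $2/\lambda\le4AB/D$ and hence $J\le16AB/D<24AB/D$. So the case split is harmless but unnecessary, and the translate/Blichfeldt and parallelogram-area detours contribute nothing.
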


\begin{proof}
Define $\|(x,y)\|\coloneq \max(|x|/A,|y|/B)$ for $(x,y)\in\R^2$, so that $R$ is the unit ball of $\|\cdot\|$.  Let
\[
\lambda\coloneq \min\{\|z\|\colon z\in\Lambda\smallsetminus\{0\}\},
\]
which is attained at some $w^{\ast}=(s,t)$.  Since $J\ge1$, we have $\lambda\le1$.  It follows that $w^{\ast}$ is primitive (since $w^{\ast}=mz$ with $z\in\Lambda$ and $m\ge2$ would give $\|z\|<\lambda$) and $\Lambda\cap\R w^{\ast}=\Z w^{\ast}$.  Consequently, $\Lambda/\Z w^{\ast}$ is torsion free of rank $1$, and $w^{\ast}$ extends to a basis $(w^{\ast},w')$ of $\Lambda$.  See \cite{Cassels} for this and other standard facts about lattices.  Now, for all $z\in\Lambda$, there exist unique $a,b\in\Z$ such that $z=aw^{\ast}+bw'$.

If $z,z'\in\Lambda\cap R$ have the same $w'$-coordinate, then $z-z'=(a-a')w^{\ast}$ and $\|z-z'\|\le2$.  Therefore, $|a-a'|\le2/\lambda$, and for each $b\in\Z$, there exist at most $1+2/\lambda$ points of $\Lambda\cap R$ with $w'$-coordinate equal to $b$.  Next, we bound the number of values that the $w'$-coordinate can assume on $\Lambda\cap R$.  Let $\pi\colon\R^2\to\R$ be the linear form with $\pi(w^{\ast})=0$ and $\pi(w')=1$, so that $\pi(z)=b$ whenever $z=aw^{\ast}+bw'$.  Explicitly, $\pi(x,y)=(sy-tx)/\Delta$, where $\Delta\coloneq\det(w^{\ast},w')$ satisfies $|\Delta|=D$.  Since $|s|\le\lambda A$ and $|t|\le\lambda B$, we obtain
\[
\sup_{(x,y)\in R}|\pi(x,y)|\le\frac{|s|B+|t|A}{D}\le\frac{2\lambda AB}{D}.
\]
Thus, on $\Lambda\cap R$, the $w'$-coordinate is an integer in an interval of length $4\lambda AB/D$, and therefore takes at most $1+4\lambda AB/D$ values.  Multiplying the two counts and using $\lambda\le1$, we find that
\[
J+1\le\Big(1+\frac{2}{\lambda}\Big)\Big(1+\frac{4\lambda AB}{D}\Big)
\le 1+\frac{12AB}{D}+\frac{2}{\lambda}.
\]

Suppose finally that $J\ge 24AB/D$, so that $12AB/D\le J/2$.  The preceding display then yields $J\le J/2+2/\lambda$, whence $\lambda\le 4/J$.  Since $w^{\ast}=(s,t)$ with $|s|\le\lambda A$ and $|t|\le\lambda B$, it follows that
\[
|s|\le \frac{4A}{J} \qquad\text{and}\qquad |t|\le \frac{4B}{J},
\]
as desired.
\end{proof}

Call $(p,q)\in\mathcal{H}_n$ \emph{resonant} if there exist $d\in\mathcal{D}_n$ and $(k,\ell)\in\Lambda_d\cap(\mathcal{F}_d\times \mathcal{F}_d)$ such that
\begin{equation}
\label{eqn:resonance}
k\ell\ne0\qquad\text{and}\qquad |k\ell|\le\frac{1000 d \mathcal{Q}}{M(p,q)}.
\end{equation}
The pair $(p,q)$ is \emph{degenerate} if $\max\big(\gcd(p,n),\gcd(q,n)\big)\ge M(p,q)/\big(1000 \mathcal{Q}\big)$.

\begin{proposition}
\label{prop:reduction}
If $(p,q)\in\mathcal{H}_n$, $M(p,q)\ge1000 \mathcal{Q}$, and $|E(p,q)|\ge M(p,q)/2$, then $(p,q)$ is resonant or degenerate.
\end{proposition}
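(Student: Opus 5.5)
We argue by contraposition: assume $(p,q)$ is neither resonant nor degenerate and $M\coloneq M(p,q)\ge 1000\mathcal{Q}$, and show $|E(p,q)|<M/2$. By Lemma~\ref{lem:majd}, $|E(p,q)|\le 2^{\omega(n)+1}+\sum_{d\in\mathcal{D}_n}\mathcal{W}_d$. Since $2^{\omega(n)+1}\le 2\mathcal{Q}\le M/500$, it suffices to prove $\mathcal{W}_d\le M/(4\cdot 2^{\omega(n)})$ for each $d\in\mathcal{D}_n$, say. Split the sum defining $\mathcal{W}_d$ into the axis terms ($k=0$ or $\ell=0$) and the off-axis terms ($k\ell\ne0$).

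\emph{Axis terms.} A point $(k,0)\in\Lambda_d$ means $kp\equiv 0\pmod d$, i.e.\ $k$ is a multiple of $d/\gcd(p,d)$. The contribution is
\[
\frac1d\sum_{0\ne k\in\mathcal{F}_d,\ \frac{d}{\gcd(p,d)}\mid k} \frac{d}{2|k|}H_q^{(d)}.
\]
Writing $g=\gcd(p,d)$ and $k=jd/g$, this is $\ll H_q^{(d)}\,\frac{g}{d}\log(1+g)\le \frac{h_q}{n}\,g\log n$. By non-degeneracy, $g\le\gcd(p,n)<M/(1000\mathcal{Q})$. Since $h_q/n<1$ and $\mathcal{Q}\ge 2^{\omega(n)}(1+\log n)^4$ absorbs the $\log n$ and the $2^{\omega(n)}$ factor, this is at most $M/(100\cdot 2^{\omega(n)})$. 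The $\ell=0$ terms are symmetric.

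\emph{Off-axis terms.} Here the weight is $\frac{d}{4|k\ell|}$. Non-resonance gives $|k\ell|>1000d\mathcal{Q}/M$ for every such lattice point. Decompose dyadically: for $K,L$ powers of $2$ with $K,L\le d$, let $J_{K,L}$ count nonzero points of $\Lambda_d$ with $K\le|k|<2K$, $L\le|\ell|<2L$ and $k\ell\neq 0$. These points have weight at most $d/(4KL)$, and $J_{K,L}=0$ unless $4KL>1000d\mathcal{Q}/M$.

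Apply Lemma~\ref{lem:lattice} to the box $A=2K$, $B=2L$ with $D=d$ by \eqref{eqn:detLambda}. Either $J_{K,L}<96KL/d$, contributing $\le \frac1d\cdot\frac{d}{4KL}\cdot\frac{96KL}{d}=24/d$ per box, or there is a nonzero $w=(w_1,w_2)\in\Lambda_d$ with $|w_1|\le 8K/J$ and $|w_2|\le 8L/J$.

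In the second case:
\begin{itemize}
\item If $w_1w_2\ne0$, then non-resonance forces $64KL/J^2\ge|w_1w_2|>1000d\mathcal{Q}/M$. Hence $J\ll\sqrt{KLM/(d\mathcal{Q})}$, and the box contributes $\ll \frac{1}{4KL}\sqrt{KLM/(d\mathcal{Q})}\ll \sqrt{M/(KLd\mathcal{Q})}\ll M/(d\mathcal{Q})\cdot\mathcal{O}(1)$, using $KL\gg d\mathcal{Q}/M$.
\item If instead $w$ lies on an axis, say $w=(w_1,0)$, then $d/\gcd(p,d)\le|w_1|\le 8K/J$. This gives $J\le 8K\gcd(p,n)/d$, and the box contributes $\le \frac{1}{4KL}\cdot 8K\gcd(p,n)/d\ll \gcd(p,n)/(Ld)$, which is controlled by non-degeneracy as in the axis case.
\end{itemize}

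Summing over the $O((\log d)^2)$ dyadic boxes, each $\mathcal{W}_d$ is $\ll (\log n)^2\big(1/d+M/(d\mathcal{Q})+M/(1000\mathcal{Q})\big)$. The factor $(1+\log n)^4$ in $\mathcal{Q}$ absorbs the logarithms and the $2^{\omega(n)}$ from summing over $d$. With the explicit constant $1000$, the total is below $M/4$, which finishes the argument.

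The main obstacle is this off-axis bookkeeping. Lemma~\ref{lem:lattice} must be applied to each dyadic box, and the short vector it produces must itself be handled according to whether it is off-axis (contradicting non-resonance) or on an axis (degeneracy). The delicate point is keeping all numerical constants explicit enough that $1000$ and the fourth power of $\log$ in $\mathcal{Q}$ suffice. If the constants are tight, I would first try cruder box choices, such as boxes $|k|\le 2K$ rather than annuli, accepting a constant loss.
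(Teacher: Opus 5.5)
Your argument is correct in outline and uses the same ingredients as the paper. You split $\mathcal{W}_d$ into the axis families and the dyadic boxes $\Lambda_d(K,L)$, apply Lemma~\ref{lem:lattice} to the box $[-2K,2K]\times[-2L,2L]$, and use the same trichotomy for the short vector: off-axis is controlled by non-resonance, on-axis by non-degeneracy.

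The difference is bookkeeping. The paper pigeonholes to a single family of contribution at least $M/(20\mathcal{Q})$. This hands it the inequality $KL\le 5dJ\mathcal{Q}/M$ and turns every case into a contradiction. You instead bound every family from above and sum. For that you need an extra observation, which you correctly make: a nonempty box has $4KL>1000d\mathcal{Q}/M$ by non-resonance. This is what converts $J\ll\sqrt{KLM/(d\mathcal{Q})}$ into a per-box bound of size $M/\mathcal{Q}$. Your axis sub-case is also slightly more economical than the paper's. Using $d/\gcd(p,d)\le|w_1|\le 8K/J$ directly gives $J\le 8K\gcd(p,d)/d$ without the paper's recount.

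Two things need repair, though neither breaks the argument.

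\textbf{A dropped factor of $d$.} You lose a factor of $d$ in every box estimate. A point $(k,\ell)$ with $k\ell\ne0$ contributes $\frac1d u_d(k)v_d(\ell)=\frac{d}{4|k\ell|}$ to $\mathcal{W}_d$, as you say. So a box contributes at most $dJ/(4KL)$, not $J/(4KL)$. The corrected per-box bounds are:
\begin{itemize}
\item $24$ in the first alternative;
\item $\frac{d}{4KL}\cdot 8\sqrt{KLM/(1000d\mathcal{Q})}\le M/(250\mathcal{Q})$ in the off-axis case, using $d/(KL)<M/(250\mathcal{Q})$;
\item $2\gcd(p,d)/L<M/(500\mathcal{Q})$ in the axis case.
\end{itemize}
There are at most $2^{\omega(n)}(\log_2 n)^2\le\mathcal{Q}/7\le M/7000$ boxes in total. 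So all boxes together contribute at most $\frac{\mathcal{Q}}{7}\bigl(24+\frac{M}{250\mathcal{Q}}\bigr)\le M/250$. Hence the conclusion $|E(p,q)|<M/2$ survives with room to spare.

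\textbf{Reduction modulo $d$.} The vector $w$ produced by Lemma~\ref{lem:lattice} need not lie in $\mathcal{F}_d\times\mathcal{F}_d$, where resonance is defined. You must replace it by its reduction $w'$ modulo $d$, with $|w_i'|\le|w_i|$, as the paper does.
\begin{itemize}
\item If $w'$ is off-axis, your argument applies.
\item If $w'$ lies on an axis, your axis argument applies verbatim.
\item If $w'=0$, then some $|w_i|\ge d$. This forces $J\le 8K/d$ or $J\le 8L/d$, hence a box contribution of at most $2$.
\end{itemize}
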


\begin{proof}
Write $M\coloneq M(p,q)$, and suppose that $(p,q)$ is neither resonant nor degenerate.  Since $M\ge1000\cdot2^{\omega(n)}$, we have $2^{\omega(n)+1}\le M/4$.  Lemma~\ref{lem:majd} therefore gives
\begin{equation}
\label{eqn:needbig}
\sum_{d\in\mathcal{D}_n}\mathcal{W}_d \ge \frac M2-2^{\omega(n)+1} \ge \frac M4 .
\end{equation}

For each $d\in\mathcal{D}_n$, we partition the index set of $\mathcal{W}_d$ into the families
\[
\mathcal{A}_d\coloneq \{(0,\ell)\in\Lambda_d\cap(\mathcal{F}_d\times\mathcal{F}_d)\colon \ell\ne0\},\qquad \mathcal{B}_d\coloneq \{(k,0)\in\Lambda_d\cap(\mathcal{F}_d\times\mathcal{F}_d)\colon k\ne0\},
\]
and
\[
\Lambda_d(K,L)\coloneq \{(k,\ell)\in\Lambda_d\cap(\mathcal{F}_d\times \mathcal{F}_d)\colon K\le|k|<2K,\ L\le|\ell|<2L\},
\]
where $K$ and $L$ run over the powers of $2$ that are at most $d/2$.  This is indeed a partition: every $(k,\ell)$ in the index set with $k\ell\ne0$ satisfies $1\le|k|,|\ell|\le d/2$, and hence lies in exactly one of the sets $\Lambda_d(K,L)$.  The \emph{contribution} of a family $\mathcal{G}$ is $\frac1d\sum_{(k,\ell)\in\mathcal{G}}u_d(k)v_d(\ell)$, so that $\mathcal{W}_d$ equals the sum of the contributions of its families.  The total number of families is at most $2^{\omega(n)}\big(2+(\log_2n)^2\big)$, which is at most $5 \mathcal{Q}/(1+\log n)^2$.  By \eqref{eqn:needbig} and the pigeonhole principle, there exists a family with contribution at least $(1+\log n)^2M/(20 \mathcal{Q})\geq M/(20 \mathcal{Q})$.

Suppose first that this family is $\mathcal{A}_d$ for some $d$, and let $g_q\coloneq \gcd(q,d)$.  If $(0,\ell)\in\mathcal{A}_d$, then $\ell q\equiv0\pmod d$, hence $(d/g_q)\mid\ell$.  Writing $\ell=jd/g_q$ with $j\in\mathcal{F}_{g_q}\smallsetminus\{0\}$, we have $v_d(\ell)=g_q/(2|j|)$.  In particular, $\mathcal{A}_d$ is nonempty, so $g_q\ge2$.  Since $u_d(0)=H^{(d)}_p\le h_pd/n$, Lemma~\ref{lem:harmonic} gives
\[
\frac{1}{d}\sum_{(0,\ell)\in\mathcal{A}_d}u_d(0)v_d(\ell) \le \frac{h_p}{n}\sum_{j\in\mathcal{F}_{g_q}\smallsetminus\{0\}}\frac{g_q}{2|j|} \le \frac{h_p}{n} g_q\log g_q .
\]
The left-hand side is at least $(1+\log n)^2M/(20 \mathcal{Q})$.  Since $h_p<n$ by \eqref{eqn:hsum} and $\log g_q\le1+\log n$, it follows that $g_q\ge(1+\log n)M/(20 \mathcal{Q})\ge M/(1000 \mathcal{Q})$.  As $g_q$ divides $\gcd(q,n)$, the pair $(p,q)$ is degenerate, which is a contradiction.  The family $\mathcal{B}_d$ is treated symmetrically, with $\gcd(p,d)$ in place of $g_q$.

Suppose instead that this family is $\Lambda_d(K,L)$ for some $d$, $K$, and $L$.  Let $J\coloneq \#\Lambda_d(K,L)$, so that $J\ge1$.  On this family, $u_d(k)\le d/(2K)$ and $v_d(\ell)\le d/(2L)$, so the contribution is at most $dJ/(4KL)$.  It follows that $dJ/(4KL)\ge M/(20 \mathcal{Q})$; that is,
\begin{equation}
\label{eqn:KLbound}
KL\ \le\ \frac{5 d J \mathcal{Q}}{M}.
\end{equation}
We now apply Lemma~\ref{lem:lattice} with $\Lambda=\Lambda_d$, $D=d$ (see \eqref{eqn:detLambda}), $A=2K$, and $B=2L$.  Let $J'$ denote the quantity denoted $J$ in that lemma.  The box $R$ contains $\Lambda_d(K,L)$, so $J'\ge J\ge1$.

If the first alternative of Lemma~\ref{lem:lattice} holds, then $J\le J'<24AB/D=96KL/d$.  Combined with \eqref{eqn:KLbound}, this yields $M<480 \mathcal{Q}$, which contradicts $M\ge1000 \mathcal{Q}$.  Hence the second alternative holds: since $4A/J'\le8K/J$ and $4B/J'\le8L/J$, there is $(k_0,\ell_0)\in\Lambda_d\smallsetminus\{0\}$ with $|k_0|\le8K/J$ and $|\ell_0|\le8L/J$.  Let $(k_0',\ell_0')\in\mathcal{F}_d\times\mathcal{F}_d$ be the representative of $(k_0,\ell_0)$ modulo $d$, so that $(k_0',\ell_0')\in\Lambda_d$, $|k_0'|\le|k_0|$, and $|\ell_0'|\le|\ell_0|$.  Three cases arise.

First, suppose that $(k_0',\ell_0')=(0,0)$.  Then $d$ divides both $k_0$ and $\ell_0$, so at least one of $k_0$ and $\ell_0$ is a nonzero multiple of $d$; say $|\ell_0|\ge d$, the case $|k_0|\ge d$ being symmetric.  Then $8L/J\ge|\ell_0|\ge d$, so $KL\ge L\ge Jd/8$.  Combined with \eqref{eqn:KLbound}, this yields $M\le40 \mathcal{Q}$, which is a contradiction.

Second, suppose that $k_0'\ell_0'\ne0$.  By \eqref{eqn:KLbound},
\[
|k_0'\ell_0'|\le\frac{64 KL}{J^2}\le\frac{320 d \mathcal{Q}}{J M}\le\frac{1000 d \mathcal{Q}}{M},
\]
so $(p,q)$ is resonant, which is a contradiction.

Third, suppose that exactly one of $k_0'$ and $\ell_0'$ vanishes; say $\ell_0'=0$, the case $k_0'=0$ being symmetric.  Let $g_p\coloneq \gcd(p,d)$.  Then $k_0'p\equiv0\pmod d$, so $(d/g_p)\mid k_0'$, and therefore $d/g_p\le|k_0'|\le8K$.  Now fix $\ell$ with $L\le|\ell|<2L$; there are at most $2L$ such $\ell$.  The integers $k$ with $(k,\ell)\in\Lambda_d$, if any, lie in a single residue class modulo $d/g_p$, so at most $4Kg_p/d+1$ of them satisfy $|k|<2K$; since $d/g_p\le8K$, this count is at most $12Kg_p/d$.  Hence $J\le24KLg_p/d$.  The contribution of $\Lambda_d(K,L)$ is therefore at most $dJ/(4KL)\le6g_p$, while it is at least $M/(20 \mathcal{Q})$, so $g_p\ge M/(120 \mathcal{Q})\ge M/(1000 \mathcal{Q})$.  As $g_p$ divides $\gcd(p,n)$, the pair $(p,q)$ is degenerate, which is a contradiction.

These cases are exhaustive, so the proof is complete.
\end{proof}

\subsection{Proof of Theorem~\ref{thm:Sharp}}

\begin{lemma}
\label{lem:quadcount}
Let $\varepsilon>0$, let $d\mid n$, and let $K,L,P,Q\ge1$ satisfy $KP\le n^2$ and $LQ\le n^2$.  The number of quadruples $(k,\ell,p,q)\in\Z^4$ with $K\le|k|<2K$, $L\le|\ell|<2L$, $P\le p<2P$, $Q\le q<2Q$, and $kp+\ell q\equiv0\pmod d$ is
\[
\ll_{\varepsilon}n^{\varepsilon}\Big(\frac{KLPQ}{d}+\min(KP,LQ)\Big).
\]
\end{lemma}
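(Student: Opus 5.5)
We count by grouping on the values of the two products. Set $m_1\coloneq kp$ and $m_2\coloneq \ell q$. The conditions give $K P\le |m_1|<4KP$ and $LQ\le |m_2|<4LQ$, together with the congruence $m_1+m_2\equiv 0\pmod d$. Given $m_1$, the pair $(k,p)$ satisfies $k\mid m_1$ and $p=m_1/k$. So the number of pairs $(k,p)$ producing a given $m_1$ is at most $2\tau(|m_1|)$, and likewise for $m_2$. Since $|m_1|,|m_2|\le 4n^2$ by hypothesis, the divisor bound gives $\tau(|m_i|)\ll_\varepsilon n^{\varepsilon}$. Hence the quadruple count is
\[
\ll_\varepsilon n^{2\varepsilon}\cdot\#\{(m_1,m_2)\colon |m_1|\asymp KP,\ |m_2|\asymp LQ,\ m_1+m_2\equiv0 \ (\mathrm{mod}\ d)\}.
\]
Renaming $\varepsilon$ absorbs the factor $n^{2\varepsilon}$, so it remains to count pairs of integers in two ranges with $m_1+m_2\equiv 0\pmod d$.

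This pair count is elementary. By symmetry, suppose $KP\le LQ$. Fix $m_1$; there are $\le 6KP$ choices. Then $m_2$ is confined to a single residue class modulo $d$ and to a union of two intervals of total length $\le 6LQ$. That leaves at most $6LQ/d+2$ choices for $m_2$. The total is therefore $\ll KPLQ/d+KP$, which equals $\ll KLPQ/d+\min(KP,LQ)$. If instead $LQ\le KP$, we fix $m_2$ first and obtain the same bound.

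The step to get right is the first one, the reduction to pairs. It loses only a divisor-function factor, but this relies on the hypotheses $KP\le n^2$ and $LQ\le n^2$. Those bounds are what make $\tau(m)\ll n^{\varepsilon}$ uniform over the relevant range of $m$, namely $|m|\le 4n^2$. Note also that $k$ may be negative. This changes nothing, because a choice of $m_1$ together with a signed divisor $k$ of $m_1$ determines $p$, and there are $2\tau(|m_1|)$ signed divisors. The remaining care is only in the ``$+1$'' term from the interval count, and that term is exactly what produces the $\min(KP,LQ)$ summand.
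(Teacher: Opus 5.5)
Your proof is correct and takes essentially the paper's route: you fix the smaller side, count the larger product in a single residue class modulo $d$ inside an interval of length $O(\max(KP,LQ))$ (the $+1$ term produces the $\min(KP,LQ)$ summand), and recover the factorization from the product via the divisor bound, which holds uniformly because $|kp|,|\ell q|<4n^2$. The only difference is that the paper enumerates the smaller pair $(\ell,q)$ directly (at most $2LQ$ choices) and so uses the divisor bound once, whereas you also pass the smaller side through its product and pay a harmless second factor of $\tau$.
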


\begin{proof}
By symmetry, we may assume that $LQ\le KP$.  There are at most $2L$ integers $\ell$ with $L\le|\ell|<2L$ and at most $Q$ integers $q$ with $Q\le q<2Q$, hence at most $2LQ$ pairs $(\ell,q)$.  Fix one such pair.  For every quadruple counted, we have $kp+\ell q=md$ for some $m\in\Z$.  Since $|kp|<4KP$ and $|\ell q|<4LQ\le4KP$, we have $|md|<8KP$, so $m$ takes at most $16KP/d+1$ values.  Fix one such $m$.  The quadruple is now determined by the pair $(k,p)$, which satisfies $kp=md-\ell q$.  This product is nonzero, because $|k|\ge1$ and $p\ge1$, so each such pair is determined by the divisor $k$ of $md-\ell q$ together with its sign.  Therefore, there are at most $2\tau(|md-\ell q|)$ pairs $(k,p)$, and since $|md-\ell q|=|kp|<4KP\le4n^2$, the divisor bound gives $\tau(|md-\ell q|)\ll_{\varepsilon}n^{\varepsilon}$.  To complete the proof, we multiply the three counts and recall that $LQ=\min(KP,LQ)$.
\end{proof}

\begin{proof}[Proof of Theorem~\ref{thm:Sharp}]
Let $\varepsilon\in(0,1)$ and $\delta\coloneq \varepsilon/7$.  We use the bound $\mathcal{Q}\ll_{\delta}n^{2\delta}$ from \eqref{eqn:Q_small}, the consequence $n^2/\varphi(n)\ll_{\delta}n^{1+\delta}$ of \eqref{eqn:phi_lower}, and the hyperbola method in the form
\[
\#\{(x,y)\in\N^2\colon xy\le X\}\ll_{\delta}Xn^{\delta}\qquad(1\le X\le n^{10}).
\]

By Lemma~\ref{lem:usable}, it suffices to count the pairs $(p,q)\in\mathcal{H}_n$ with $S(p,q)\le2$.  For such a pair, $|E(p,q)|\ge M(p,q)-2$, because $E(p,q)=S(p,q)-M(p,q)$.  If $M(p,q)<1000 \mathcal{Q}$, then $pq\le h_ph_q=M(p,q)n^2/\varphi(n)\ll_{\delta}n^{1+3\delta}$, so the hyperbola bound shows that there are $\ll_{\delta}n^{1+4\delta}$ such pairs.  We may therefore assume that $M\coloneq M(p,q)\ge1000 \mathcal{Q}$.  Since $\mathcal{Q}\ge2$ by \eqref{eqn:Q_small}, we have $M\ge4$ and $|E(p,q)|\ge M-2\ge M/2$. Proposition~\ref{prop:reduction} now shows that $(p,q)$ is resonant or degenerate.

Suppose that $(p,q)$ is degenerate; say $g\coloneq \gcd(q,n)\ge M/(1000 \mathcal{Q})$, the case $\gcd(p,n)\ge M/(1000 \mathcal{Q})$ being symmetric.  Write $q=gq'$ with $q'\in\N$.  Since $h_q\ge q=gq'$, the bound $g\ge h_ph_q\varphi(n)/(1000 \mathcal{Q}n^2)$ implies that
\[
g\ \ge\ \frac{h_p g q' \varphi(n)}{1000 \mathcal{Q} n^2}, \qquad\text{and hence}\qquad pq'\ \le\ h_pq'\ \le\ \frac{1000 \mathcal{Q} n^2}{\varphi(n)}\ \ll_{\delta}\ n^{1+3\delta}.
\]
By the hyperbola bound, the pair $(p,q')$ takes $\ll_{\delta}n^{1+4\delta}$ values.  The integer $g$ is one of the $\tau(n)\ll_{\delta}n^{\delta}$ divisors of $n$, and the pair $(g,q')$ determines $q$.  Hence, there are $\ll_{\delta}n^{1+5\delta}$ degenerate pairs.

Finally, suppose that $(p,q)$ is resonant.  Cover $\mathcal{H}_n$ by the boxes $P\le p<2P$, $Q\le q<2Q$, where $P$ and $Q$ run over the powers of $2$ that are less than $n$; there are at most $(\log_2n)^2$ boxes.  Fix a box, and consider the resonant pairs that it contains.  For such a pair, $h_p\ge P$ and $h_q\ge Q$, so $M\ge PQ\varphi(n)/n^2$.  Define $T_d\coloneq 1000 d \mathcal{Q}n^2/(PQ\varphi(n))$.  By \eqref{eqn:resonance}, the pair admits $d\in\mathcal{D}_n$ and $(k,\ell)\in\Lambda_d\cap(\mathcal{F}_d\times\mathcal{F}_d)$ with $k\ell\ne0$ and $|k\ell|\le1000 d \mathcal{Q}/M\le T_d$.  Choose the powers of $2$ with $K\le|k|<2K$ and $L\le|\ell|<2L$.  Then $KL\le|k\ell|\le T_d$, and $K,L\le d/2\le n$ because $|k|,|\ell|\le d/2$.  In particular, $KP\le n^2$ and $LQ\le n^2$, so Lemma~\ref{lem:quadcount} applies.  Summing over the possible parameters, the number of resonant pairs in the box is at most
\[
\ll_{\delta}n^{\delta}\sum_{d\in\mathcal{D}_n}\ \sum_{\substack{K,L\ \textup{powers of }2\\ K,L\le n,\ KL\le T_d}} \Big(\frac{KLPQ}{d}+\min(KP,LQ)\Big).
\]
The first term satisfies $KLPQ/d\le T_dPQ/d=1000 \mathcal{Q}n^2/\varphi(n)\ll_{\delta}n^{1+3\delta}$, independently of $d$, $P$, and $Q$.  The second term satisfies
\[
\min(KP,LQ)^2\le KLPQ\le T_dPQ \le 1000 \mathcal{Q}n \cdot n^2/\varphi(n)\ll_{\delta}n^{2+3\delta},
\]
so that $\min(KP,LQ)\ll_{\delta}n^{1+2\delta}$.  The number of triples $(d,K,L)$ appearing in the sum is at most $2^{\omega(n)}(1+\log_2n)^2\ll_{\delta}n^{2\delta}$.  Hence the box contributes $\ll_{\delta}n^{\delta}\cdot n^{2\delta}\cdot n^{1+3\delta}=n^{1+6\delta}$, and summing over the $(\log_2n)^2$ boxes gives $\ll_{\delta}n^{1+7\delta}$ resonant pairs.

Adding the estimates from the three cases and recalling that $7\delta=\varepsilon$, we obtain $\#\mathcal{L}_n\ll_{\varepsilon}n^{1+\varepsilon}$.  Since $\#\mathcal{H}_n\gg n^2$ by Lemma~\ref{lem:sizeHn}, the theorem follows.
\end{proof}

\begin{remark}
\label{rem:optimal}
The exponent $-1$ is optimal for arguments resting on Proposition~\ref{prop:LNZ}.  From the point of view of discrepancy, the reason is that no equidistribution statement can detect a box that is expected to contain one point.  Indeed, if $h_ph_q\le n^2/\varphi(n)$, then $M(p,q)\le1$.  A point set may miss such a box altogether, in which case $S(p,q)\le2$ and Proposition~\ref{prop:LNZ} yields no information.  Since $n^2/\varphi(n)\ge n$, there are $\gg n$ such pairs in $\mathcal{H}_n$.
\end{remark}

\section*{Declaration of generative AI and AI-assisted technologies}\label{sec:AxiomProver}

Our initial journal submission proved a theorem weaker than Theorem~\ref{thm:Sharp}.  The final theorem statement and proof arose from human-LLM interaction while investigating some suggestions in our referee report. 

To make the scope precise, we emphasize that the AI system was not asked to reprove deep external
theorems in the theory of translation surfaces, Veech theory,
and the Mirzakhani-Wright rank obstruction theorem and its number theoretic
formulation Proposition~\ref{prop:LNZ}. 
The proof of Theorem~\ref{thm:Sharp} was autonomously formalized assuming these facts in Lean \textbf{4.26.0} (see \cite{Mathlib2020, Lean}) using AxiomProver.  All relevant files are posted in the repository  
\begin{center}
\url{https://github.com/AxiomMath/paucity}.
\end{center}

\end{document}